\documentclass[a4paper,11pt,oneside]{article}
\usepackage[paperwidth=195mm,paperheight=270mm,left=20mm,right=17mm,top=20mm,bottom=20mm,includefoot]{geometry}

\usepackage{cite}
\usepackage{amsmath,amssymb,mathrsfs,amscd,amsthm}
\usepackage{graphicx}
\usepackage{hyperref}
\usepackage{mathtools}
\usepackage{titlesec}

\newtheorem{Theorem}{Theorem}[section]
\theoremstyle{definition}
\newtheorem{Definition}[Theorem]{Definition}
\newtheorem{Lemma}[Theorem]{Lemma}
\newtheorem{Proposition}[Theorem]{Proposition}
\newtheorem{Corollary}[Theorem]{Corollary}
\newtheorem{Remark}[Theorem]{Remark}

\titleformat{\section}
  {\normalfont\Large\centering}   
  {\thesection}
  {1em}
  {}

\titleformat{\subsection}[runin]   
  {\normalfont\large\bfseries}
  {\thesubsection}
  {1em}
  {}
  [.\quad]    

\begin{document}

\title{Cut pairs and Morse splitting of finitely generated groups}

\author{Suzhen Han, Hao Liang, Qing Liu}

\date{ }
\maketitle

\begin{abstract}
Bowditch's theorem for hyperbolic groups establishes a fundamental correspondence between splittings over two-ended subgroups and the existence of local cut points in the Gromov boundary. While analogous results have been obtained for CAT(0) and relatively hyperbolic groups, no general theorem of this type exists for arbitrary finitely generated groups. The Morse boundary $\partial_*\Gamma$, introduced by Charney--Sultan and extended by Cordes, provides a quasi-isometry invariant boundary for any finitely generated group $\Gamma$ that naturally generalizes the Gromov boundary. In this paper, we prove that a splitting of a finitely generated group with connected Morse boundary over a two-ended Morse subgroup gives rise to a separating pair of points in the Morse boundary. 
\end{abstract}

\section{Introduction}

A recurring theme in geometric group theory is that the algebraic splittings of a group are reflected in the topology of its boundary. For Gromov hyperbolic groups, this principle is made precise by Bowditch's celebrated theorem: a one-ended hyperbolic group that is not cocompact Fuchsian splits over a two-ended subgroup if and only if its Gromov boundary contains a local cut point \cite{Bowditch98}. Beyond its intrinsic elegance, this result is foundational because the Gromov boundary is a quasi-isometry invariant; consequently, Bowditch's theorem implies that the JSJ decomposition of a hyperbolic group is preserved under quasi-isometries \cite{Bowditch98}.

Similar phenomena have been investigated for broader classes of groups. Papasoglu and Swenson \cite{PS09} and Haulmark \cite{Haul18} studied (local) cut points (pairs) in the boundaries of $\mathrm{CAT}(0)$ groups and their relationship to splittings over two-ended subgroups \cite{PS09}. 
For relatively hyperbolic groups, the relations between splits between two-ended subgroups and the topology of the boundary were studied by Groff\cite{Groff13}, Guralnik \cite{Gura05}, and Haulmark\cite{Hau19}.

All of these results, however, are confined to groups that are either relatively hyperbolic or CAT(0). Many groups of central interest---such as right-angled Artin groups, mapping class groups, 3-manifold groups---do not belong to the above classes, and thus fall outside the scope of this theory. This motivates the search for a boundary theory that applies to all finitely generated groups and still detects algebraic splittings.

The \emph{Morse boundary} $\partial_* \Gamma$, introduced by Charney and Sultan for $\mathrm{CAT}(0)$ spaces \cite{CS15} and extended by Cordes to all finitely generated groups \cite{Cor17}, provides a natural candidate. It collects all geodesic rays exhibiting hyperbolic-like behavior and is equipped with the direct limit topology. Crucially, the Morse boundary is a quasi-isometry invariant, making it a robust tool for distinguishing groups up to quasi-isometry.  

While the Morse boundary has proven to be a fruitful invariant, its topological properties and their algebraic implications remain poorly understood. Recent work has focused on determining when Morse boundaries are connected or totally disconnected. Fioravanti and Karrer showed that for groups splitting as graphs of groups with undistorted edge groups having empty relative Morse boundary, the connected components of the Morse boundary originate from vertex groups \cite{FK22}. Charney, Cordes, and Sisto classified the Morse boundaries of right-angled Artin groups and showed that the Morse boundaries of cusped hyperbolic \(3\)-manifolds are connected \cite{CCS23}.
Cordes and Levcovitz \cite{CL25} recently showed that connectivity does occur in natural classes of groups. They proved that a one-ended right-angled Coxeter group has connected, non-empty Morse boundary if and only if it is wide-avoidant. Chaika and Hensel \cite{CH24} proved that the Morse boundary of the mapping class group of a surface of genus at least \(2\) is path-connected.

However, a general theorem relating the \emph{local} topology of the Morse boundary to splittings over two-ended subgroups---analogous to Bowditch's theorem---has been absent from the literature.

The purpose of this paper is to establish a partial result in this direction. We prove that for a finitely generated group $\Gamma$ whose Morse boundary $\partial_* \Gamma$ is connected and contains more than two points, a splitting over a two-ended Morse subgroup forces a topological separation in the boundary. Our main theorem is the following.

\begin{Theorem}\label{thm:main}
Let $\Gamma$ be a finitely generated group. Suppose the Morse boundary $\partial_* \Gamma$ is connected and contains more than two points. If $\Gamma$ splits over a two-ended Morse subgroup $\Gamma(e)$, then $\partial_* \Gamma \setminus \Lambda \Gamma(e)$ is disconnected.
\end{Theorem}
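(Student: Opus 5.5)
Throughout, $\Gamma(e)=\langle g\rangle$ (up to finite index) is a two-ended Morse subgroup, and $\Lambda\Gamma(e)=\{g^{+\infty},g^{-\infty}\}$ is its two-point limit set in $\partial_*\Gamma$. The plan is to work in the Bass--Serre tree $T$ of the splitting. Fix the edge $e$ with stabilizer $\Gamma(e)$ and the two half-trees $T_1,T_2$ of $T\setminus \mathrm{int}(e)$. Fix a Cayley graph $X$ of $\Gamma$ and a $\Gamma$-equivariant coarsely Lipschitz map $\pi:X\to T$. Since $\Gamma(e)$ is Morse, the orbit $\Gamma(e)\cdot o$ is a Morse quasi-line $\ell$. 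Points of $X$ far from $\ell$ map to one side of $e$, so $X\setminus N_R(\ell)$ has (coarsely) two ``sides'' $X_1=\pi^{-1}(T_1)$ and $X_2=\pi^{-1}(T_2)$.

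Define $U_i\subset\partial_*\Gamma\setminus\Lambda\Gamma(e)$ as the set of Morse rays $\gamma$ from $o$ that eventually stay in $X_i$. The steps, in order:

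\textbf{(1) $U_1\cup U_2$ covers the complement.} For a Morse ray $\gamma$ not asymptotic to $\ell$, I would show that $\gamma$ eventually leaves every neighborhood $N_R(\ell)$ for good. Otherwise $\gamma$ returns near $\ell$ at unboundedly far points, and Morseness of both $\gamma$ and $\ell$ (slim triangles for Morse quasi-geodesics) forces $\gamma$ to fellow-travel $\ell$, so its endpoint lies in $\Lambda\Gamma(e)$. Once $\gamma$ is outside $N_R(\ell)$, any crossing from $X_1$ to $X_2$ must pass through the coarse separator $\pi^{-1}(e)\subset N_R(\ell)$. Hence $\gamma$ stays in one $X_i$.

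\textbf{(2) $U_1\cap U_2=\emptyset$ and each $U_i$ is open.} Disjointness is immediate from step (1). For openness, I would use Cordes' direct limit topology. It suffices to check that $U_i\cap\partial_*^N X$ is open in each stratum $\partial_*^N X$ for every Morse gauge $N$. On a stratum, rays with the same gauge that fellow-travel $\gamma$ up to a large time $t$ form a basic neighborhood. If $\gamma$ has already entered $X_i$ at distance $\gg R$ from $\ell$ by time $t$, then so do these nearby rays. Applying the argument of step (1) uniformly in $N$, they then cannot cross back to $X_{3-i}$ without coming near $\ell$, which would contradict the uniform Morse estimates.

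\textbf{(3) Both $U_i$ are nonempty.} This is the step I expect to be the main obstacle. The difficulty is that $\partial_*\Gamma$ might a priori be carried entirely by one side, for example when all Morse directions live in one vertex group. I would exploit the hypotheses that $\partial_*\Gamma$ has more than two points and is connected. Taking a point $\xi\notin\Lambda\Gamma(e)$, $\xi$ lies in some $U_i$, say $U_1$. If $U_2=\emptyset$, then translating by a group element $h$ with $h e$ separating things appropriately (available since the splitting is nontrivial: an element of $\Gamma$ moving $T_1$ into $T_2$, or a hyperbolic element of $T$ crossing $e$) would give $h\xi\in U_2$. To make this work I need $h\xi$ to stay off $\Lambda\Gamma(e)$, and I need the choice of $h$ to send $U_1$-type points to the other side. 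Failing this, connectedness of $\partial_*\Gamma$ together with step (1) would force $\partial_*\Gamma=\overline{U_1}$ with $g^{\pm\infty}$ as limit points from a single side. One would then derive a contradiction using the $\Gamma(e)$-action: $g$ acts with north--south dynamics on Morse rays near $\ell$ because $\ell$ is Morse.

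With (1)--(3), the sets $U_1$ and $U_2$ form a separation of $\partial_*\Gamma\setminus\Lambda\Gamma(e)$, proving Theorem~\ref{thm:main}.
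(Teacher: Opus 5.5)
\textbf{The gap is in step (3).} Steps (1) and (2) are sound and take a genuinely different, lighter route than the paper.

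The paper goes through the vertex groups: Bowditch's quasiconvexity of vertex groups, $\Lambda G\cap\Lambda H=\Lambda(G\cap H)$, finiteness of conjugates, and an injection of $\partial_*\Gamma\setminus\bigcup_v\Lambda\Gamma(v)$ into $\partial\Sigma$. It then gets openness from closedness of the limit set of the quasiconvex set $\phi^{-1}(\Sigma_i)$. You use only that $\ell=\Gamma(e)o$ is a Morse subset, and you check openness directly on each stratum $\partial_*^N\Gamma$. This avoids needing that limit sets of quasiconvex subsets are closed.

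Two small corrections to (1)--(2):
\begin{itemize}
\item The separator must be $\pi^{-1}(m(e))$ for a continuous equivariant $\pi$, as in the paper, not $\pi^{-1}(e)$. The closed edge contains its endpoints, whose preimages are coarsely vertex-group orbits and not near $\ell$.
\item What drives both steps is the Morse property of $\ell$, not of the rays. With $o\in\ell$, a geodesic from $o$ that comes back within $R$ of $\ell$ at time $s$ lies, on $[0,s]$, in a neighbourhood of $\ell$ whose radius depends only on $R$ and the gauge of $\ell$. Together with the $N$-dependent bound on how far $N$-Morse rays ending at $g^{\pm\infty}$ stray from $\ell$, this gives openness of $U_i\cap\partial_*^N\Gamma$.
\end{itemize}

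Step (3) is left unresolved, and your fallback cannot close it. Connectedness and the north--south dynamics of $\Gamma(e)$ give no contradiction, because they are all present for a trivial splitting. Take $\Gamma=\Gamma*_{\Gamma(e)}\Gamma(e)$ with $\Gamma$ a closed hyperbolic surface group and $\Gamma(e)$ cyclic: there $X_2$ lies in a bounded neighbourhood of $\ell$, so your $U_2$ is empty. The argument must therefore use nontriviality of the splitting; connectedness plays no role.

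Of your two candidate elements, a hyperbolic element whose axis crosses $e$ does not work. It maps one of $T_1,T_2$ into itself and the other onto a half-tree containing it, so $hT_1\not\subseteq T_2$ and nothing forces $h\xi$ into $U_2$. An element with $hT_1\subseteq T_2$ is the right choice, but you neither produce one nor verify the two conditions you list. Here is how to complete it:
\begin{itemize}
\item Take $h\in\Gamma(w_2)\setminus\Gamma(e)$, where $w_2$ is the endpoint of $e$ in $T_2$. For $A*_CB$ with $C\neq A,B$, any $h\in B\setminus C$ works; this is where nontriviality enters. Then $he\neq e$ is another edge at $w_2$, so $hT_1\subseteq T_2$.
\item By equivariance, $hU_1=U_1(he)$, the set built from the edge $he$. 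Its rays eventually lie in $\pi^{-1}(hT_1)\subseteq X_2$.
\item It remains to check $U_1(he)\cap\Lambda\Gamma(e)=\emptyset$. Either $\Lambda\Gamma(he)=\Lambda\Gamma(e)$, which is excluded from $U_1(he)$ by definition, or the two limit sets are disjoint. In the latter case, choose $\pi(o)\in T_1$. Rays to $g^{\pm\infty}$ stay near $\Gamma(e)o\subseteq\pi^{-1}(T_1)\subseteq\pi^{-1}(hT_2)$, so step (1) applied to $he$ puts $g^{\pm\infty}$ in $U_2(he)$.
\item Hence $hU_1\subseteq U_2$, and symmetrically with $T_1,T_2$ swapped. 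Since $|\partial_*\Gamma|>2$ and step (1) give $U_1\cup U_2\neq\emptyset$, both sets are nonempty.
\end{itemize}
The paper instead exhibits points directly: $k\,g^{\pm\infty}\in U_i$ for $k$ in the stabilizer of a vertex of $T_i$ with $k\notin G(g)$.
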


\begin{Remark}
The Morse boundaries of Coxeter groups \cite{CL25}, mapping class groups \cite{CH24}, and fundamental groups of cusped hyperbolic \(3\)-manifolds \cite{CCM19} are connected, so Theorem \ref{thm:main} applies to them: any splitting over a two-ended Morse subgroup must cut their Morse boundary.
\end{Remark}

Here $\Lambda \Gamma(e)$ denotes the limit set of $\Gamma(e)$ in the Morse boundary. Since $\Gamma(e)$ is two-ended, its limit set consists of exactly two points. Thus Theorem~\ref{thm:main} asserts that a splitting over a two-ended Morse subgroup gives rise to a separating pair of points in $\partial_* \Gamma$. This can be viewed as the Morse-boundary analogue of one direction of Bowditch's theorem.

The converse of Theorem~\ref{thm:main}---whether the disconnection of $\partial_* \Gamma \setminus \Lambda \Gamma(e)$ implies that $\Gamma$ splits over $\Gamma(e)$---remains open. Resolving this converse would yield a complete characterization of two-ended Morse splittings in terms of the topology of the Morse boundary, and would represent a significant step toward a JSJ theory for general finitely generated groups.

The paper is organized as follows. In Section~\ref{sec:background} we review background on Morse boundaries, Morse subgroups, and limit sets. In Section~\ref{sec:lemmas} we prove the key technical lemmas relating the geometry of the Bass--Serre tree to the topology of the Morse boundary. Section~\ref{sec:proof} contains the proof of Theorem~\ref{thm:main}.

\paragraph{\textbf{Acknowledgements.}}
The authors would like to thank Ruth Charney and Matthew Cordes for the helpful discussions. 
The authors are also grateful to the referee for carefully reading the manuscript and providing numerous helpful comments and suggestions.
Han is supported by the NSFC of China (Grant No. 12401082) and the Fundamental Research Funds for the Central Universities (No. 531118010905).
Liang is partially supported by the NSFC of China (Grant No. 12671085).
Liu is supported by the NSFC of China (Grant No.12301084).
The last author would like to thank the Chern Institute of Mathematics (CIM), where this work was completed during a visit, for the excellent research environment and stimulating academic atmosphere it provided.

\paragraph{\textbf{AI Declaration.}}
AI was only used for typesetting and language clean-up.

\section{Preliminaries}\label{sec:background}

We assume throughout this section that $X$ is a proper geodesic metric space and $o\in X$ is a basepoint.

For a subset \(A\subseteq X\) and a constant \(r\ge 0\), the closed \(r\)-neighbourhood of \(A\) is denoted by

\[
\mathcal{N}_r(A) \coloneqq \{x\in X : d(x,A)\le r\},
\]
where \(d(x,A)=\inf_{a\in A}d(x,a)\). For two subsets \(A_1,A_2\subseteq X\), the \textit{Hausdorff distance} between them is defined by
\[
d_{\mathcal{H}}(A_1,A_2)\coloneqq\inf\{r\ge0:A_1\subseteq\mathcal{N}_r(A_2)\text{ and }A_2\subseteq\mathcal{N}_r(A_1)\}.
\]

\begin{Definition}[Quasi-isometric embedding and quasi-isometry]
Let $K\ge 1$ and $C\ge 0$. A map $f\colon(X,d_X)\to(Y,d_Y)$ between metric spaces is a
\textit{$(K,C)$-quasi-isometric embedding} if for all $x_1,x_2\in X$,
\[
\frac{1}{K}d_X(x_1,x_2)-C\le d_Y(f(x_1),f(x_2))\le Kd_X(x_1,x_2)+C.
\]
If, in addition, $\mathcal{N}_C(f(X))=Y$, then $f$ is called a \textit{$(K,C)$-quasi-isometry}.
\end{Definition}

\begin{Definition}[Morse geodesic]\label{def:morse}
A geodesic $\alpha\subseteq X$ is called \textit{Morse} if there exists a function $N\colon\mathbb{R}_{\ge1}\times\mathbb{R}_{\ge0}\to\mathbb{R}_{\ge0}$ such that every $(K,C)$-quasi-geodesic $\sigma$ whose endpoints lie on $\alpha$ satisfies $\sigma\subset\mathcal{N}_{N(K,C)}(\alpha)$. The function $N$ is called a \textbf{Morse gauge}, and we say that $\alpha$ is \textbf{$N$-Morse}.
\end{Definition}

More generally, a subset $A\subset X$ is called \textbf{$N$-Morse}, if every $(K,C)$-quasi-geodesic $\sigma$ with endpoints lie in $A$ satisfies $\sigma\subset\mathcal{N}_{N(K,C)}(A)$.

If $X$ is a Gromov hyperbolic space, then all geodesics are uniformly Morse. In contrast, the Euclidean plane admits no infinite Morse geodesics whatsoever.
Morse geodesics are fundamental objects for studying non-hyperbolic proper geodesic metric spaces.
Their behavior closely resembles geodesics in hyperbolic spaces, a property formalized by the lemmas below.

\subsection{Morse geodesics and Morse triangles}
We collect below several elementary results concerning Morse geodesics and Morse geodesic triangles that will be used in the later arguments.

\begin{Lemma}\label{subpth}\cite[Lemma~3.1]{Liu21}
For every Morse gauge $N$, there exists a Morse gauge $N_1$, depending only on $N$, such that every subpath of an \(N\)-Morse geodesic is \(N_1\)-Morse.
\end{Lemma}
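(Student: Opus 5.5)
Let $\alpha$ be an $N$-Morse geodesic and $\beta=\alpha|_{[s,t]}$ a subpath. Let $\sigma$ be a $(K,C)$-quasi-geodesic with endpoints $x,y\in\beta$. We want a bound $d(z,\beta)\le N_1(K,C)$ for every $z\in\sigma$, with $N_1$ depending only on $N$. The only difference from the hypothesis is that $\beta$ may be strictly shorter than $\alpha$. Since $x,y\in\alpha$, the $N$-Morse property already gives $\sigma\subset\mathcal{N}_{N(K,C)}(\alpha)$. So it remains to show that points of $\sigma$ cannot be close only to parts of $\alpha$ lying outside $[x,y]\subseteq\beta$.

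The plan is a projection/continuity argument. Take $z\in\sigma$ and a point $w=\alpha(u)$ with $d(z,w)\le N(K,C)$. If $u$ lies between the parameters of $x$ and $y$, then $w\in\beta$ and we are done. Otherwise, say $u$ lies beyond $y$. Since $\sigma$ may be taken tame (continuous, after the usual taming step of replacing it by a uniformly close piecewise-geodesic quasi-geodesic), the set of points of $\sigma$ within $N(K,C)$ of the portion of $\alpha$ before $y$ and the set within $N(K,C)$ of the portion after $y$ are closed and cover $\sigma$. By connectedness there is a point $z'\in\sigma$, between $z$ and the endpoint $y$ along $\sigma$, within $N(K,C)$ of both portions. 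This produces points $\alpha(a)$ with $a\le y$ and $\alpha(b)$ with $b\ge y$, each within $N(K,C)$ of $z'$. Because $\alpha$ is a geodesic, $d(\alpha(a),\alpha(b))\le 2N(K,C)$, and $y$ lies between them on $\alpha$, so $d(z',y)\le 3N(K,C)$.

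Next, bound the distance from $z$ to $y$. The subpath of $\sigma$ from $z'$ to $y$ is a $(K,C)$-quasi-geodesic with endpoints at distance at most $3N$, so its length, and the diameter of its parameter interval, is bounded in terms of $K$, $C$ and $N$. The point $z$, however, is not on that subpath; rather, $z'$ lies between $z$ and $y$. So I will use the subpath of $\sigma$ from $z$ to $y$ directly. Its endpoints are $z$, near $\alpha(u)$ with $u>y$, and $y$ itself. Combined with the quasi-geodesic inequality at $z'$, this shows that the parameter distance from $z'$ to $y$ is at most $K(3N+C)$. This alone does not control $z$.

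The simpler route is to apply the continuity argument to every point $z$ whose nearby point on $\alpha$ lies outside $[x,y]$, choosing $z'$ as the \emph{last} crossing point before reaching $z$. Then the whole segment of $\sigma$ from $z'$ to $z$ lies near the outer portion of $\alpha$. Any point of $\sigma$ beyond $y$ in this sense therefore lies in a subsegment of $\sigma$ starting at $z'$, which is $3N$-close to $y$, and staying near $\alpha|_{[y,\infty)}$. A quasi-geodesic segment fellow-travelling a geodesic ray must make definite progress along it. Its far end is near some point of $\alpha|_{[y,\infty)}$, while $\sigma$ must eventually return to $y$ or $x$ and end in $[x,y]$. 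The return leg would then have to cross back near $y$. This forces the excursion to be a loop based near $y$ of bounded length, hence bounded diameter $D(K,C,N)$. Thus $d(z,\beta)\le 3N(K,C)+D(K,C,N)\eqqcolon N_1(K,C)$. The main obstacle is making this excursion bound precise. I expect to handle it with the quasi-geodesic inequality between the two near-$y$ crossing times, whose endpoints are within $6N$ of each other, which bounds the parameter length of the excursion by $K(6N+C)$ and its diameter by $K^2(6N+C)+KC$. Taming constants are absorbed into $N_1$.
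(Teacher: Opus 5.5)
Your final argument is correct. The paper gives no proof of this lemma; it simply quotes \cite[Lemma~3.1]{Liu21}, so there is no in-paper route to compare against, and yours is a valid self-contained proof.

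Your first crossing point, taken between $z$ and $y$, should be dropped: the endpoint $y$ supplies such a point trivially, so it controls nothing, as you noticed. The ``definite progress'' heuristic is never used and should go too.

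What remains is short. Tame $\sigma$, with the constants absorbed into $N_1$, and orient it from $x=\sigma(0)$ to $y=\sigma(L)$. For $z=\sigma(r)$ lying within $N(K,C)$ of a point of $\alpha$ beyond $y$, let $r_1\le r$ be the last time at which $\sigma$ is within $N(K,C)$ of both portions of $\alpha$ on either side of $y$. Then $d(\sigma(r_1),y)\le 3N(K,C)$. Because $\sigma(L)=y$, the lower quasi-geodesic inequality gives $|L-r_1|\le K(3N+C)$, and the upper inequality then gives
\[
d(z,y)\le 3N+K^2(3N+C)+C.
\]
This is exactly your ``two near-$y$ crossing times'' estimate, with the second time taken to be the endpoint of $\sigma$. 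The case where the nearby point lies before $x$ is symmetric.
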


Recall that a geodesic triangle is said to be \textbf{\(\delta\)-slim} if each of its sides is contained in the \(\delta\)-neighbourhood of the union of the other two sides.
The next lemma, which combines Lemmas~2.3 and~2.4 of \cite{CCM19}, asserts that two Morse sides suffice to guarantee a slim triangle. The original statement for triangles with vertices in \(X\) appeared in \cite{Cor17}.

\begin{Lemma}\label{slim-Morse}\cite{Cor17, CCM19}
    Let $\triangle(p, q, r)$ be a geodesic triangle with vertices in $X\cup\partial_*X$. Assume that two sides of $\triangle(p, q, r)$ are $N$-Morse. Then there exists a positive constant $\delta_N$ and a Morse gauge $N'$, both depending only on $N$, such that the third side of $\triangle(p, q, r)$ is $N'$-Morse and the triangle is $\delta_N$-slim. 
\end{Lemma}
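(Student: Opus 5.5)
The plan is to follow the standard argument for slim triangles in hyperbolic spaces. I will treat the Morse property of the two given sides as a substitute for global hyperbolicity, and get the Morse gauge of the third side from the quasi-geodesic stability this produces. I assume first that all three vertices $p,q,r$ lie in $X$ and that the sides $[p,q]$ and $[p,r]$ are $N$-Morse, with $[q,r]$ the third side. The ideal case is handled at the end by a limiting argument.

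\textbf{Step 1 (slimness near the Morse sides).} For $x\in[q,r]$, I compare the concatenation $[q,p]\cup[p,r]$ with $[q,r]$. Let $x_p$ be a point of $[p,q]$ nearest to $r$ (or, in the usual way, a "center" point). Then $[q,x_p]\cup[x_p,r]$, reparametrized, is a $(3,0)$-quasi-geodesic, or a quasi-geodesic with constants independent of the triangle, by the standard projection estimate. The key claim is that $[x_p,r]$ stays uniformly close to $[p,r]$. This holds because the path $[r,x_p]\cup[x_p,p]$ is a quasi-geodesic with endpoints $r,p$ on the $N$-Morse side $[p,r]$, so it lies in $\mathcal N_{N(3,0)}([p,r])$. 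Therefore $[q,r]$, with endpoints on $[q,x_p]\subset[p,q]$ and the path $[x_p,r]$, is a geodesic whose endpoints lie in the set $[p,q]\cup[p,r]$. I then apply the Morse property of the subpaths (Lemma~\ref{subpth}) to the geodesic $[q,r]$ split at a point near $x_p$. This shows $[q,r]\subset\mathcal N_{\delta}([p,q]\cup[p,r])$ with $\delta$ depending only on $N$. The reverse inclusions, namely that $[p,q]$ lies near $[p,r]\cup[q,r]$, come from applying the same Morse argument to the quasi-geodesic $[p,r]\cup[r,q]$ restricted to subsegments whose endpoints lie on the Morse sides.

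\textbf{Step 2 (Morse gauge of the third side).} Let $\sigma$ be a $(K,C)$-quasi-geodesic with endpoints $a,b\in[q,r]$. By Step 1, $a$ and $b$ are $\delta$-close to points $a',b'$ of $[p,q]\cup[p,r]$. If both land on one side, I concatenate short segments to get a $(K,C')$-quasi-geodesic with endpoints on that Morse side. It then lies near that side, and hence near $[q,r]$ by the slimness of Step 1 together with the fact that $[q,r]$ tracks the Morse sides on the relevant segment. If they land on different sides, I split $\sigma$ at a point of $\sigma$ closest to the corner region near $x_p$ and argue on each piece. The result is an $N'$ depending only on $N$.

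\textbf{Step 3 (ideal vertices).} When some vertices lie in $\partial_*X$, I realize each ideal side as a limit of finite geodesics. Their Morse gauges are uniformly controlled, via the standard fact from \cite{Cor17} that geodesics to a point of $\partial_*X$ from nearby basepoints are uniformly Morse. I apply the finite case with the triangles truncated along the Morse sides and pass to limits using Arzelà–Ascoli, since $X$ is proper. Uniformity of the constants gives the bounds for the ideal triangle.

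I expect the main obstacle to be Step 2 in the case where the endpoints of $\sigma$ project to different Morse sides. There one must check that the concatenation remains a quasi-geodesic with controlled constants. I would handle it with the center point $x_p$ and the tripod-like structure obtained in Step 1.
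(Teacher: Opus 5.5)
The paper does not prove this lemma; it quotes it from \cite{Cor17, CCM19}. Your proposal has a genuine gap at its base, in Step~1, and the rest depends on it. The Morse property of $[p,q]$, or of a subpath via Lemma~\ref{subpth}, only constrains quasi-geodesics whose \emph{both} endpoints lie on (or uniformly near) that one geodesic. The third side $[q,r]$ has one endpoint on each Morse side. You have not shown that $[p,q]\cup[p,r]$ is a Morse set, and that is essentially what is being proved. So the fact that the endpoints of $[q,r]$ lie in this union gives no control.

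Splitting $[q,r]$ ``at a point near $x_p$'' assumes that $[q,r]$ passes near $x_p$. That is exactly the thinness you are trying to prove, and knowing that $x_p$ is $N(3,0)$-close to $[p,r]$ does not give it. The $(3,0)$-quasi-geodesic $[q,x_p]\cup[x_p,r]$ cannot be used either, because its endpoints lie on $[q,r]$, which is not yet known to be Morse. Your reverse inclusions also fail as stated. The path $[p,r]\cup[r,q]$ is not a quasi-geodesic with uniform constants: in a tripod in a tree it runs out along the whole leg to $r$ and back. Step~2 uses the Step~1 thinness, and its cross case is left open, so even the finite case is not established.

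The missing idea is to project the common vertex onto the third side. Let $m\in[q,r]$ be a point closest to $p$.
\begin{itemize}
\item Then $[q,m]\cup[m,p]$ and $[r,m]\cup[m,p]$ are $(3,0)$-quasi-geodesics with endpoints on $[p,q]$, resp.\ $[p,r]$. So they lie in the $N(3,0)$-neighbourhoods of those sides, which is the thinness of $[q,r]$.
\item For the other two sides, use a connectedness fact: a path joining the endpoints of a geodesic inside its $R$-neighbourhood has the geodesic in its $2R$-neighbourhood. Apply it to $[q,m]\cup[m,p]$ (and symmetrically to $[r,m]\cup[m,p]$), together with $[m,p]\subset\mathcal N_{N(3,0)}([p,r])$.
\item If you want to keep $x_p$, split $[q,r]$ at a point $s$ \emph{closest} to $x_p$, rather than one assumed to be near it. Then use $[q,s]\cup[s,x_p]$, whose endpoints lie on $[p,q]$, and $[x_p,s]\cup[s,r]$, whose endpoints lie on the uniformly Morse geodesic $[x_p,r]$.
\end{itemize}

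With $m$ as the corner, your Step~2 idea becomes routine. The halves $[q,m]$ and $[m,r]$ are uniformly Morse, since each is a geodesic with endpoints uniformly close to an $N$-Morse geodesic. Hence the same-side case is immediate. In the cross case, cut $\sigma$ at a point closest to $m$ and append the geodesic from that point to $m$. This gives two quasi-geodesics, with constants depending only on $(K,C)$, whose endpoints lie on $[q,m]$, resp.\ $[m,r]$. So $\sigma$ stays uniformly close to $[q,r]$.

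Finally, in Step~3, when the third side has ideal endpoints, the given geodesic is not a limit of your truncated sides. You still have to show it is uniformly close to the limiting geodesic.
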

 
  For a bi-infinite Morse geodesic $\alpha$, denote by $\alpha_-,\alpha_+\in\partial_*X$ its endpoints; for a Morse ray $\sigma$, write $\sigma_+$ for its point at infinity.
  From Lemma~\ref{subpth} and Lemma~\ref{slim-Morse}, we immediately obtain the following basic observation.   
  
\begin{Remark}\label{uniform-H-dis}
    Let $\alpha$ be an $N$-Morse bi-infinite geodesic in $X$. Then every geodesic joining the two endpoints $\alpha_-$ and $\alpha_+$ is uniformly Morse, and the Hausdorff distance between any two such geodesics is bounded above by a constant $C_N$ that depends only on $N$.
\end{Remark}

\subsection{Morse boundaries}
We now define the Morse boundary of a proper geodesic metric space \( X \). Fix a basepoint \( o \in X \). 
As a set, the Morse boundary \( \partial_*X_o \) consists of the equivalence classes of all Morse geodesic rays emanating from \( o \), where two rays are equivalent precisely when their Hausdorff distance is finite. To introduce a topology on the Morse boundary, one must take a little care with the class of Morse gauges. We say $N$ is a \textit{refined Morse gauge} if it is non-decreasing in each variable and continuous in the second. For more details, one can check \cite[Lemma~A.4]{CSZ24}.

Given a fixed \textbf{refined} Morse gauge \( N \), define the space
\[
\partial_*^N X_o \coloneqq  \left\{ [\alpha] \mid \text{there exists an } N\text{-Morse geodesic ray } \beta \in [\alpha] \text{ based at } o \right\}.
\]
This space is naturally endowed with the compact-open topology. The space $\partial_*^NX_{o}$ is compact in this topology \cite[Lemma A.5]{CSZ24}.

Let \(\mathcal{M}\) be the family of all refined Morse gauges. For \(N,N' \in \mathcal{M}\), we write the partial order \(N \leq N'\) whenever \(N(K,C) \leq N'(K,C)\) holds for all \(K,C\). The \textit{Morse boundary} is then defined as the direct limit
\[
\partial_* X_o = \varinjlim_{N \in \mathcal{M}} \partial_*^N X_o,
\]
equipped with the direct limit topology; that is, a subset \(U \subseteq \partial_* X_o\) is open if and only if \(U \cap \partial_*^N X_o\) is open for every refined Morse gauge \(N\).

We refer the reader to \cite{Cor17} for more details. For any other basepoint \(o'\in X\), there is a canonical homeomorphism \(\partial_{*}X_{o}\to\partial_{*}X_{o'}\).
Consequently, the Morse boundary is independent of the choice of basepoint, and we simply write \(\partial_* X\) for it.
Any quasi-isometry of proper geodesic metric spaces induces a homeomorphism between their Morse boundaries. Thus the Morse boundary is a quasi-isometry invariant of finitely generated groups; for a group \(G\) we denote its Morse boundary by \(\partial_* G\).

The properties of Morse boundaries and their applications have been investigated in numerous works; see, for instance, \cite{CCM19, CCS23, CS15, Cor17, Liu21, Liu22, CH17, Zb24, HL24, HL26}. Nevertheless, a general theorem relating the local topology of the Morse boundary to splittings over two-ended subgroups has yet to be established. The present paper provides a partial result in this direction.

\section{Limit sets of quasiconvex groups}\label{sec:lemmas}
In this section we study some basic structural properties of limit sets of quasiconvex groups.
Let $\Gamma$ be a finitely generated group acting properly discontinuously on a proper geodesic metric space $(X,d)$ and let $o\in X$ be a basepoint.

\begin{Definition}[Quasiconvex groups]
The group \( \Gamma\) is called \textit{quasiconvex} if there exists a constant \(D \geq 0\) such that for any \(\gamma_1, \gamma_2 \in \Gamma\), every geodesic joining \(\gamma_1 o\) and \(\gamma_2 o\) is contained in the \(D\)-neighbourhood of the orbit \(\Gamma o\).
\end{Definition}

An element $\gamma\in\Gamma$ is called \textbf{Morse} if the orbit map 
\[
n\in \mathbb{Z} \to \gamma^no\in X
\]
is a quasi-isometric embedding and its image is Morse.
From Lemma~4.5 in \cite{Liu21}, such an element has precisely two fixed points on the Morse boundary $\partial_*X$. 
Set $\operatorname{fix}(\gamma) = \{\gamma^+, \gamma^-\}$. This coincides with the limit set (see Definition~\ref{limitset}) of the cyclic subgroup $\langle\gamma\rangle$, and we denote it by $\Lambda(\gamma)$. All Morse geodesics joining $\gamma^-$ and $\gamma^+$ are called \textit{axes} of $g$. By Remark~\ref{uniform-H-dis}, these axes admit a uniform Morse gauge depending only on \(\gamma^-\) and \(\gamma^+\). We say that \(\gamma\) is \textit{\(N\)-Morse} if all its axes are \(N\)-Morse.
 By Theorem~4.6 in \cite{Liu21}, if $\Gamma$ acts geometrically on $X$, then $\gamma$ is a Morse element if and only if the fixed point set $\operatorname{fix}(\gamma)=\{p\in \partial_*X \mid \gamma p=p\}$ is nonempty. 

Define a subgroup $G(\gamma)\leq\Gamma$ as follows:

\[
G(\gamma)\coloneqq \{g\in\Gamma \mid g\operatorname{fix}(\gamma)=\operatorname{fix}(\gamma)\}.
\]

The following proposition describes the structure of \(G(\gamma)\); its proof follows exactly the argument of Lemma~2.11 in \cite{Yang19} for contracting elements and section~6.6 in \cite{DGO17}. It will be used in the next section.

\begin{Proposition} \label{Prop:G(gamma)VirCyc}
    
    Let $\Gamma$ be a finitely generated group acting properly by isometries on $(X, d)$. Fix a basepoint $o\in X$. Let $\gamma\in\Gamma$ be a Morse element. Then the following hold:
    \begin{enumerate}
    \item $\bigl[G(\gamma): \langle \gamma\rangle\bigr] < \infty$.
    \item $G(\gamma) = \bigl\{g\in \Gamma \,\bigm|\, g^{-1}\gamma^{m}g \in \{\gamma^{m}, \gamma^{-m}\} \text{ for some positive integer } m\bigr\}$.
    \item $G(\gamma) = \bigl\{g\in \Gamma \,\bigm|\, d_{\mathcal{H}}\bigl(\langle \gamma\rangle o,\, g\langle \gamma\rangle o\bigr) < \infty\bigr\}$.
\end{enumerate}                
\end{Proposition}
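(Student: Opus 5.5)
The strategy is to prove the three descriptions of $G(\gamma)$ in a cyclic order, using the geometry of the Morse quasi-axis $\ell\coloneqq\langle\gamma\rangle o$. By definition of a Morse element, $\ell$ is a Morse quasi-geodesic. By Remark~\ref{uniform-H-dis}, $\ell$ lies at finite Hausdorff distance from every axis joining $\gamma^-$ and $\gamma^+$. Write $G_2$ and $G_3$ for the sets on the right-hand sides of (2) and (3). We show $G(\gamma)\subseteq G_3$, then that $G_3$ contains $\langle\gamma\rangle$ with finite index, and finally $G_3\subseteq G_2\subseteq G(\gamma)$. This gives (1)--(3) at once.

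\emph{Step 1: $G(\gamma)\subseteq G_3$.} Let $g\in G(\gamma)$. Then $g\ell$ is a Morse quasi-geodesic whose endpoints are $g\gamma^\pm$, and these form the same set $\{\gamma^-,\gamma^+\}$. Pick a geodesic axis $\alpha$ from $\gamma^-$ to $\gamma^+$. Then $g\alpha$ is also a geodesic between these two points, so by Remark~\ref{uniform-H-dis} we have $d_{\mathcal H}(\alpha,g\alpha)\le C_N$. Since $\ell$ is within finite Hausdorff distance of $\alpha$, and $g\ell$ is within the same distance of $g\alpha$, it follows that $d_{\mathcal H}(\ell,g\ell)<\infty$.

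\emph{Step 2: $[G_3:\langle\gamma\rangle]<\infty$.} First, $G_3$ is a subgroup: Hausdorff distance is symmetric, the translation action is isometric, and the triangle inequality holds. Let $g\in G_3$. Then $go$ lies within $R_g$ of $\ell$, so there is $n$ with $d(go,\gamma^n o)\le R_g$. Hence $\gamma^{-n}g$ moves $o$ a bounded amount. By properness this leaves only finitely many coset representatives, \emph{provided the bound $R_g$ is uniform} in $g$.

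This uniformity is the main obstacle. I would obtain it with the Morse property, following \cite[Lemma 2.11]{Yang19} and \cite[\S6.6]{DGO17}. The sets $\ell$ and $g\ell$ are both at finite Hausdorff distance from the bi-infinite geodesic $\alpha$. Then $g\alpha$ and $\alpha$ are geodesics with the same endpoints at infinity, so Remark~\ref{uniform-H-dis} gives $d_{\mathcal H}(g\alpha,\alpha)\le C_N$. Combined with $g\ell\subseteq\mathcal N_{D}(g\alpha)$, where $D=d_{\mathcal H}(\ell,\alpha)$, this yields $go\in\mathcal N_{2D+C_N}(\ell)$. The constant $2D+C_N$ is independent of $g$. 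The finitely many elements $h$ with $d(o,ho)\le 2D+C_N$ then form a finite set of coset representatives.

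\emph{Step 3: $G_3\subseteq G_2\subseteq G(\gamma)$.} Let $g\in G_3$. Since $\langle\gamma\rangle$ has finite index in $G_3$, conjugation by $g$ permutes its finitely many conjugates. So $g^{-1}\langle\gamma\rangle g\cap\langle\gamma\rangle$ has finite index in $\langle\gamma\rangle$, which gives $g^{-1}\gamma^m g=\gamma^k$ for some $m,k\neq0$. Comparing translation lengths on the quasi-axis yields $|k|=|m|$. Here we use that the orbit map is a quasi-isometric embedding, so the stable translation length of $\gamma$ is positive. Hence $g\in G_2$. Finally, let $g\in G_2$ with $g^{-1}\gamma^mg=\gamma^{\pm m}$. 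Then $g$ maps the fixed point set of $\gamma^{\pm m}$ to that of $\gamma^m$. Both fixed point sets equal $\operatorname{fix}(\gamma)$, because $\gamma^m$ is Morse with the same quasi-axis. Therefore $g\operatorname{fix}(\gamma)=\operatorname{fix}(\gamma)$, i.e.\ $g\in G(\gamma)$.

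Together with Step 1, the three sets coincide, and Step 2 gives finite index, proving (1)--(3).
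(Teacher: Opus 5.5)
Your proposal is correct and follows essentially the paper's route: the uniform bound $go\in\mathcal N_{2D+C_N}(\ell)$, obtained from Remark~\ref{uniform-H-dis} and combined with properness, gives (1) and (3) exactly as in the paper. The only difference is in (2). There you get $g^{-1}\gamma^m g=\gamma^k$ algebraically from the finite index, then use that stable translation length is conjugation-invariant and homogeneous. This is a cleaner packaging of the paper's pigeonhole step and its growth comparison via $\gamma^{m^l}g^l=g^l\gamma^{j^l}$.
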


\begin{proof}
    \begin{enumerate}        
        \item Assume that $\gamma$ is $N$-Morse. By the proof of Lemma~4.5 in \cite{Liu21}, there exists a bi-infinite $N$-Morse geodesic $\alpha_{\gamma}$ such that 
        \[
        d_{\mathcal{H}}(\alpha_{\gamma}, \langle \gamma\rangle o)\le C_1,
        \] 
        where the constant $C_1$ depends on $\gamma$ and $o$. For any $g\in G(\gamma)$, Remark~\ref{uniform-H-dis} yields a constant $C_N$, depending only on \(N\), such that 
        \[
        d_{\mathcal{H}}(\alpha_{\gamma}, g(\alpha_{\gamma}))\leq C_N.
        \] 
        Hence, 
        \begin{equation}\label{d_H}
d_{\mathcal{H}}\bigl(\langle \gamma\rangle o,\, g\langle \gamma\rangle o\bigr) \le 2C_1 + C_N.
\end{equation}
        In particular,  $o\in \mathcal{N}_{2C_1+C_N}(g\langle \gamma\rangle o)$,so there exists an integer $m\in\mathbb{Z}$ such that 
        \[
        d(o, g\gamma^{m}o)\le 2C_1+C_N.
        \]
        Define the set $T=\{h\in \Gamma \mid d(o, ho)\le 2C_1+C_N\}\cap G(\gamma)$.. Since \(\Gamma\) acts properly, \(T\) is finite. Because \(g\gamma^m \in T\), we obtain
        \[
        G(\gamma) = \bigcup_{h\in T} h\langle\gamma\rangle,
        \] 
        which proves that the index is finite.
        
        \item If $g\in \Gamma$ satisfies $g^{-1}\gamma^{n}g \in \{\gamma^n, \gamma^{-n}\}$ for some positive integer $n$, then $g\Lambda(\gamma^n)=\Lambda(\gamma^n)=\Lambda(\gamma)$. Hence the right-hand side is contained in $G(\gamma)$.
        
        Conversely, take any $g\in G(\gamma)$. By \eqref{d_H}, for every integer $k$ there exists an integer $n_k$ such that 
        \[
        d(\gamma^{k}o, g\gamma^{n_k}o)\le 2C_1+C_N.
        \]
        Thus $\gamma^{-k}g\gamma^{n_k}\in T$, which is a finite set. Consequently, there exist integers $k< k'$ with
        \[
        \gamma^{-k}g\gamma^{n_k}=\gamma^{-k'}g\gamma^{n_{k'}}.
        \] 
        Setting $m=k'-k>0$ and $j=n_{k'}-n_k$, we obtain
        \[
        \gamma^{m}g=g\gamma^{j}.
        \]
        An easy induction shows that for every positive integer $l$,
        \[
        \gamma^{m^l}g^l = g^l\gamma^{j^l}.
        \]
        Suppose the map $n\to \gamma^{n}o$ is a $(K, C)$-quasi-isometric embedding for some $K\ge 1$ and $C\ge 0$.
        Then
\[
\frac{1}{K}|j|^l - C \le d(\gamma^{j^l}o, o) \le K|j|^l + C,
\quad
\frac{1}{K}m^l - C \le d(\gamma^{m^l}o, o) \le Km^l + C.
\]
Now observe that
\[
d(\gamma^{j^l}o, o) = d\big(g^l\gamma^{j^l}o, g^lo\big) = d\big(\gamma^{m^l}g^{l}o, g^lo\big),
\]
and
\[
\begin{aligned}
\big|d(\gamma^{m^l}o, o) - d(\gamma^{m^l}g^{l}o, g^lo)\big|
&\le d(o, g^lo) + d(\gamma^{m^l}o, \gamma^{m^l}g^{l}o) \\
&= 2d(o, g^lo) \\
&\le 2\sum_{t=1}^{l} d(g^{t-1}o, g^to)
= 2l\,d(o, go).
\end{aligned}
\]
Combining these estimates yields
\[
\frac{1}{K}|j|^l - C \le Km^l + C + 2l\,d(o, go),
\quad
\frac{1}{K}m^l - C \le K|j|^l + C + 2l\,d(o, go)
\]
 for all positive integers \(l\). Since these inequalities hold for arbitrarily large \(l\), we must have \(|j| = m\). Therefore \(g^{-1}\gamma^m g \in \{ \gamma^m, \gamma^{-m} \}\), as required. 
        
        \item For any $g\in \Gamma$, the triangle inequality for the Hausdorff distance gives
        \[
        \bigl|d_{\mathcal{H}}(\alpha_{\gamma}, g\alpha_{\gamma})-d_{\mathcal{H}}(\langle\gamma\rangle o, g\langle\gamma\rangle o)\bigr|\le 2d_{\mathcal{H}}(\alpha_\gamma, \langle\gamma\rangle o)\le 2C_1.
        \]
        Hence $d_{\mathcal{H}}(\alpha_{\gamma}, g\alpha_{\gamma})<\infty$ if and only if $d_{\mathcal{H}}(\langle\gamma\rangle o, g\langle\gamma\rangle o)<\infty$, which completes the proof.
        
    \end{enumerate}
\end{proof}

\begin{Corollary}
    The centralizer of a Morse element in $\Gamma$ is finite-by-cyclic.
\end{Corollary}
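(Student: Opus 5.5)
The plan is to show that the centralizer $C(\gamma)$ of a Morse element $\gamma$ sits inside $G(\gamma)$, and then read off its structure from Proposition~\ref{Prop:G(gamma)VirCyc}.

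First, $C(\gamma)\le G(\gamma)$. If $g$ commutes with $\gamma$, then $g^{-1}\gamma g=\gamma$, so the description of $G(\gamma)$ in part (2) applies with $m=1$. Equivalently, $g$ permutes the fixed points of $\gamma$, since $\gamma(gp)=g\gamma p=gp$ for $p\in\operatorname{fix}(\gamma)$. By part (1), $\langle\gamma\rangle$ then has finite index in $C(\gamma)$, and $\langle\gamma\rangle$ is central in $C(\gamma)$.

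Next, I would build a homomorphism $C(\gamma)\to\mathbb{Z}$ with finite kernel. The group $C(\gamma)$ is a finitely generated group containing the infinite cyclic central subgroup $Z=\langle\gamma\rangle$ of finite index $n$. Here $\gamma$ has infinite order because the orbit map $k\mapsto\gamma^k o$ is a quasi-isometric embedding. Since $Z$ is central of finite index, Schur's theorem gives that the commutator subgroup $C(\gamma)'$ is finite. Alternatively, one can use the transfer $V\colon C(\gamma)\to Z$, $g\mapsto g^{n}$, which is a homomorphism in this central situation. Its kernel $F$ is a torsion normal subgroup.

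I then need $F$ to be finite. Because $V(\gamma)=\gamma^n\neq 1$, we have $F\cap Z=1$. Hence $F$ injects into the finite set $C(\gamma)/Z$, so $F$ is finite. The image of $V$ is a nontrivial subgroup of $Z\cong\mathbb{Z}$, hence infinite cyclic. This gives the short exact sequence
\[
1\to F\to C(\gamma)\to \mathbb{Z}\to 1,
\]
so $C(\gamma)$ is finite-by-cyclic.

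The main obstacle is the transfer (or Schur) step. Finite-index containment alone only yields virtually cyclic; the finite normal kernel requires either invoking Schur's theorem or verifying that $g\mapsto g^n$ is a homomorphism when $Z$ is central of index $n$. That verification is the standard transfer computation, so I would cite it rather than redo it.
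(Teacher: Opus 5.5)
Your proof is correct and follows the route the paper intends: the corollary is stated without proof right after Proposition~\ref{Prop:G(gamma)VirCyc}, because the centralizer lies in $G(\gamma)$ and so $\langle\gamma\rangle$ is a central subgroup of finite index. Your transfer step ($g\mapsto g^{n}$, whose kernel meets $\langle\gamma\rangle$ trivially and is therefore finite) validly supplies the upgrade from virtually cyclic to finite-by-$\mathbb{Z}$, which the paper leaves implicit.
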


The following lemma says that two Morse elements have a common fixed point if and only if they have both fixed points in common.
\begin{Proposition}
 Let $\gamma_1, \gamma_2$ be two Morse elements. Then
\[
\bigl|\Lambda(\gamma_1) \cap \Lambda(\gamma_2)\bigr| \in \{0,2\}.
\]    
\end{Proposition}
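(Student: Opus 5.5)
The plan is to assume that $\gamma_1$ and $\gamma_2$ share a fixed point, say $p=\gamma_1^+=\gamma_2^{\pm}$, and to show that then $\Lambda(\gamma_1)=\Lambda(\gamma_2)$. After replacing $\gamma_2$ by $\gamma_2^{-1}$ if necessary, we may take $p=\gamma_1^+=\gamma_2^+$. By the proof of Lemma~4.5 in \cite{Liu21}, as used in Proposition~\ref{Prop:G(gamma)VirCyc}, there are bi-infinite Morse geodesics $\alpha_1,\alpha_2$ within finite Hausdorff distance $C_1$ of $\langle\gamma_1\rangle o$ and $\langle\gamma_2\rangle o$ respectively. Then the positive half-orbits $\{\gamma_1^n o\}_{n\ge0}$ and $\{\gamma_2^n o\}_{n\ge0}$ both lie within bounded distance of Morse rays converging to $p$.

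Next I would show that these two half-orbits are asymptotic, i.e. at finite Hausdorff distance. Two Morse rays with the same endpoint eventually lie in a bounded neighbourhood of each other. For rays from a common basepoint this follows from the definition of $\partial_*X$ together with Lemma~\ref{slim-Morse}. For different basepoints I would apply slim triangles (Lemma~\ref{slim-Morse}) to the ideal triangle with vertices the two basepoints and $p$. This yields a constant $R$ such that for every $n\ge0$ there is $k_n$ with $d(\gamma_1^n o,\gamma_2^{k_n}o)\le R$.

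Then I would run the pigeonhole argument from part (2) of Proposition~\ref{Prop:G(gamma)VirCyc}. The elements $\gamma_2^{-k_n}\gamma_1^{n}$ all move $o$ by at most $R$, so by properness they lie in a finite set. Hence there are $n<n'$ with $\gamma_2^{-k_n}\gamma_1^{n}=\gamma_2^{-k_{n'}}\gamma_1^{n'}$. This gives $\gamma_1^{m}=\gamma_2^{j}$ with $m=n'-n>0$ and $j=k_{n'}-k_n$. Here $j\neq0$, since otherwise $\gamma_1^m=1$, contradicting that $n\mapsto\gamma_1^n o$ is a quasi-isometric embedding.

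It follows that $\Lambda(\gamma_1)=\Lambda(\gamma_1^m)=\Lambda(\gamma_2^j)=\Lambda(\gamma_2)$, using that a nonzero power has the same limit set because $\langle\gamma^m\rangle o$ is at finite Hausdorff distance from $\langle\gamma\rangle o$. So the intersection has $2$ points. If there is no common point, the intersection has $0$ points. Finally, the intersection cannot have exactly one point, since sharing one point forces sharing both.

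I expect the main obstacle to be the asymptotic step: making rigorous that two Morse quasi-rays with the same boundary point, based at different points, are eventually uniformly close. This requires care with the ideal-triangle version of Lemma~\ref{slim-Morse} and with the definition of the boundary via rays from the fixed basepoint. A further subtlety is that only the positive half-orbits are asymptotic, not the full bi-infinite orbits. That is why the pigeonhole argument is applied only to indices $n\ge 0$.
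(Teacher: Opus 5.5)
Your proof is correct and follows essentially the same route as the paper. A shared endpoint makes the positive half-orbits of $\gamma_1$ and $\gamma_2$ uniformly close, and properness with a pigeonhole argument then gives $\gamma_1^{m}=\gamma_2^{j}$ for some $m,j\neq 0$, which forces $\Lambda(\gamma_1)=\Lambda(\gamma_2)$. You also justify two points the paper leaves implicit: the existence of the uniform closeness constant (via slim triangles) and why $j\neq 0$.
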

\begin{proof} Assume that these two Morse elements have at least one common fixed point;  
    without loss of generality, take $\gamma_1^+=\gamma_2^+$. Fix a basepoint $o\in X$. There exists a constant $C>0$, depending only on \(\gamma_1\) and \(\gamma_2\) and $o$, such that for every sufficiently large integer $n$ one can find a positive integer $m_n$ satisfying
    \[
    d(\gamma_1^{n}o, \gamma_2^{m_n}o)\le C.
    \]
    Since \(\Gamma\) acts properly, there exist distinct positive integers \(k \neq l\) such that
    \[
    \gamma_1^{-k}\gamma_2^{m_k}=\gamma_1^{-l}\gamma_2^{m_{l}}.
    \]
    Set $\gamma=\gamma_1^{k-l}=\gamma_2^{m_k-m_l}$. Then \(\gamma\) is a Morse element and \(\Lambda(\gamma) = \Lambda(\gamma_1) = \Lambda(\gamma_2)\), which implies that the intersection contains exactly two points.
\end{proof}

\subsection{Limit set} 
We say that a sequence $\{x_{n}\} \subseteq X$ converges to a point $p \in \partial_{*} X$, denoted by $\lim_{n\to \infty} x_{n}=p$, if for some Morse gauge $N$ there exist $N$-Morse geodesics $\alpha_n=[o,x_{n}]$ such that every subsequence of $\{\alpha_n\}$ contains a further subsequence that converges uniformly on compact subsets to a geodesic ray $\alpha $ representing $p\in \partial_{*}X$. It is straightforward to verify that this notion of convergence is independent of the choice of basepoint.

\begin{Definition}[Limit set]\label{limitset}
    Let $A\subseteq X$. The \textit{limit set} of $A$ is defined as 
    \[
    \Lambda A \coloneqq \bigl\{p \in \partial_{*} X \bigm| \exists\{x_{n}\} \subseteq A,\lim_{n\to \infty} x_{n}=p \bigr\}.
    \]  
Let $G$ act properly by isometries on a proper geodesic
metric space $X$. The \textit{limit set} of the $G$-action on $\partial_*X$ is defined as $\Lambda(Go)$.
\end{Definition}

The limit set associated to $G$ does not rely on the choice of basepoint,  which allows us to denote this set unambiguously by \(\Lambda G\). 

\begin{Remark}
    By a standard argument, the geodesic \(\alpha\) above is always \(N_1\)-Morse, where \(N_1\) depends only on \(N\). We caution, however, that \(N_1\) need not equal \(N\); examples illustrating this point can be found in \cite{Liu22} and \cite{CSZ24}. If one works instead with a refined Morse gauge $N$, by Lemma~A.5 of \cite{CSZ24}, then \(N_1\) can be taken to be \(N\) itself.
\end{Remark}

We next consider limit sets under intersections. The following lemma states that for quasiconvex subgroups, taking limit sets commutes with intersection. This is well known in hyperbolic spaces; for more general spaces, it was shown for strongly quasiconvex subgroups in \cite{Tran19}.

\begin{Lemma}\label{intersection limit sets}
Let $\Gamma$ be a properly discontinuous group of isometries of the proper geodesic metric space $(X, d)$.
    Let $G$ and $H$ be quasiconvex subgroups of $\Gamma$. Then $\Lambda G\cap \Lambda H=\Lambda(G\cap H).$
\end{Lemma}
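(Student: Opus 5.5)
The inclusion $\Lambda(G\cap H)\subseteq \Lambda G\cap\Lambda H$ is immediate from Definition~\ref{limitset}: a sequence in $(G\cap H)o$ converging to $p$ is simultaneously a sequence in $Go$ and in $Ho$. The plan is to prove the reverse inclusion. The approach adapts the standard argument for hyperbolic spaces, using slim Morse triangles (Lemma~\ref{slim-Morse}) in place of $\delta$-hyperbolicity.

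Fix $p\in\Lambda G\cap\Lambda H$ and choose $g_n\in G$, $h_n\in H$ with $g_no\to p$ and $h_no\to p$. By the definition of convergence there is a refined Morse gauge $N$ and $N$-Morse geodesics $[o,g_no]$, $[o,h_no]$ which subconverge to an $N$-Morse ray $\alpha$ from $o$ representing $p$. Let $D$ be a common quasiconvexity constant for $G$ and $H$.

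The first key step is to show that, for each $t\ge 0$, the point $\alpha(t)$ lies within a uniform distance $R$ (depending only on $N$ and $D$) of both $Go$ and $Ho$. For $n$ large, $[o,g_no]$ fellow-travels $\alpha$ on $[0,t]$, so some point of it is close to $\alpha(t)$. By quasiconvexity, that point is within $D$ of some element $a_t o$ with $a_t\in G$. The same argument produces $b_t\in H$ with $d(\alpha(t),b_to)\le R$. Hence
\[
d(a_to,b_to)\le 2R,
\]
and so $a_t^{-1}b_t$ lies in the finite set $F=\{f\in\Gamma: d(o,fo)\le 2R\}$, which is finite because the action is proper.

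The second step is a pigeonhole argument. Take $t=k\to\infty$ along the integers, and pass to a subsequence along which $a_k^{-1}b_k=f$ is constant. Then
\[
b_kb_0^{-1}=a_kfb_0^{-1}=a_k a_0^{-1}\,(a_0 f b_0^{-1}),
\]
and since $a_0fb_0^{-1}=a_0a_0^{-1}b_0b_0^{-1}$, after fixing a reference index this element is the identity. Concretely, fix the first index $k_0$ of the subsequence and set
\[
c_k=a_ka_{k_0}^{-1}=b_kb_{k_0}^{-1}.
\]
This is well defined because $a_k^{-1}b_k=a_{k_0}^{-1}b_{k_0}$. Each $c_k$ lies in $G\cap H$, and
\[
d(c_ko,\alpha(k))\le d(a_ka_{k_0}^{-1}o,a_ko)+R=d(a_{k_0}^{-1}o,o)+R,
\]
which is a constant $R'$.

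The final step is to show $c_ko\to p$, giving $p\in\Lambda(G\cap H)$. This is where I expect the main obstacle, namely controlling the Morse gauges. The points $c_ko$ stay within $R'$ of the $N$-Morse ray $\alpha$. So $[o,c_ko]$ has endpoints within $R'$ of the $N$-Morse geodesic $[o,\alpha(k)]$. Concatenating with short segments gives a quasi-geodesic with uniform constants, so by the Morse property and a standard argument (Lemma~\ref{subpth} together with Lemma~\ref{slim-Morse} applied to the triangle $o,\alpha(k),c_ko$, whose short side is trivially Morse) the geodesics $[o,c_ko]$ are uniformly $N'$-Morse and uniformly Hausdorff close to $[o,\alpha(k)]$. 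Arzelà--Ascoli then gives subsequential limits, and each limit ray stays at bounded distance from $\alpha$, hence represents $p$. Care is needed to ensure that the uniform Morse gauge is refined, using \cite[Lemma~A.4]{CSZ24}. The quasiconvexity step relies on the fact that limit points of $\Lambda G$ are approached along Morse geodesics lying near $Go$, which is exactly what quasiconvexity supplies.
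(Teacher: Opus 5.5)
Your proposal is correct and follows essentially the same route as the paper. Both arguments place $\alpha(t)$ uniformly close to $Go$ and to $Ho$, use properness to make $a_t^{-1}b_t$ constant along a sequence $t_n\to\infty$, form $a_{t_n}a_{t_1}^{-1}=b_{t_n}b_{t_1}^{-1}\in G\cap H$, and finish with the Morse/slim-triangle estimate and Arzel\`a--Ascoli. One small point to tidy: $[o,g_no]$ and $[o,h_no]$ may subconverge to different rays $\eta,\eta'$ representing $p$. The $H$-side estimate therefore needs the bound $d_{\mathcal{H}}(\eta,\eta')\le C_N$ from Remark~\ref{uniform-H-dis}, exactly as the paper uses it, and this is consistent with your $R$ depending on $N$.
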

\begin{proof}
   It is immediate that $\Lambda(G\cap H)\subset \Lambda G\cap \Lambda H$. 
We now prove the reverse inclusion $\Lambda G\cap \Lambda H\subset\Lambda(G\cap H)$.
Take any $p\in \Lambda G\cap \Lambda H$. By definition, there exists sequences $\{g_n\}\subset G$, $\{h_n\}\subset H$ and a suitable Morse gauge $N$ with the following property: 
the $N$-Morse geodesics $\eta_n=[o, g_no]$ and $\eta_n'=[o, h_no]$ converge uniformly on compact subsets to $N$-Morse geodesic $\eta$ and $\eta'$, respectively, each connecting $o$ and $p$.

Since $G$ is a quasiconvex subgroup, then there exists a constant $L_{G}>0$(depending on $G$ and the basepoint $o$) such that
\[
\eta_n\subset \mathcal{N}_{L_{G}}(Go).\]

For any $t>0$ and $\epsilon=1$, one can find an integer $n_t>0$ so that for all $n\ge n_t$, we have $d(o, g_no)>t$ and $\sup_{s\in [0, t]}d(\eta(s), \eta_n(s))<1$.
It follows that for all $t>0$,
\[
d(\eta(t), Go)< L_G+1.
\]
Similarity, there exists a constant $L_H$ such that for all $t$
\[
d(\eta'(t), Ho)<L_H+1.
\]
By Remark~\ref{uniform-H-dis}, there is a constant $C_N$ depending only on $N$ such that the Hausdorff distance between 
$\eta$ and $\eta'$ is bounded by $C_N$.

For any $t>0$, choose $g_t\in G$ satisfying $d(g_to, \eta(t))<1+L_G$.
There exists $t'=t'(t)>0$ such that $d(\eta(t), \eta'(t'))\le C_N$. 
For this $t'$, pick $h_{t}\in H$ (depending on $t$) with  $d(h_{t}o,\eta'(t'))<1+L_H.$
Set $M=2+L_G+L_H+C_N$. Then for every $t>0$, 
\[
d(g_to, h_to)<M.
\]
Since $\Gamma$ acts properly, there exists some $k\in \Gamma$ such that $h_t^{-1}g_t=k$ for infinitely many values of $t$. We may therefore extract a sequence $t_n\to \infty$ such that 
\[
z_n:=g_{t_{n}}g_{t_1}^{-1}=h_{t_{n}}h_{t_1}^{-1}\in G\cap H.
\]

Since $d(z_no, g_{t_n}o)=d(o, g_{t_1}^{-1}o)$, we have 
$$d(z_no, \eta(t_n))< 1+L_G+d(o, g_{t_1}^{-1}o).$$ Hence all points  $z_no$ lie inside $\mathcal{N}_{D}(\eta)$, where $D=1+L_G+d(o, g_{t_1}o)$.

By the Morse property of $\eta$, we have $[o, z_no]\subset \mathcal{N}_{D'}(\eta)$, where $D'=D'(D, N)$.

Consider the geodesic triangle $\triangle(o, z_no, \eta(t_n))$. By Lemma~\ref{subpth}, the subpath $\eta|_{[0, t_n]}$ is $N_1$-Morse for some Morse gauge $N_1=N_1(N)$. The segment connecting $z_no$ to $\eta(t_n)$ has length bounded by $D$, so it is $N_2$-Morse for some Morse gauge $N_2=N_2(D)$. 
Applying Lemma~\ref{slim-Morse}, the geodesic $[o, z_no]$ is then $N_3$-Morse, with $N_3=N_3(N_1, N_2)$. 

Now take any subsequence of $[o, z_no]$; By the Arzelà-Ascoli theorem and the Morse property, we can extract a further subsequence converging uniformly on compact sets to an $N'$-Morse geodesic ray $\eta_1$, where $N'=N'(N_3)$. Since $[o, z_no]\subset \mathcal{N}_{D'}(\eta)$ one readily sees that
$\eta$ and $\eta_1$ have finite Hausdorff distance. 
This yields $p\in \Lambda(G\cap H)$, completing the proof.

\end{proof}

The following lemma gives a finiteness result: a quasiconvex subgroup has only finitely many conjugates containing a nontrivial power of a given Morse element.

\begin{Lemma}\label{f.m.c.I}
Let $\Gamma$ be a properly discontinuous group of isometries of the proper geodesic metric space $(X, d)$.
Suppose that $G\leq \Gamma$ is quasiconvex, and that $\gamma\in G$ is a Morse element. There exist only finitely many conjugates of $G$, each of which contains a nontrivial power of \(\gamma\).
\end{Lemma}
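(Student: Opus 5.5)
Suppose $g G g^{-1}$ contains $\gamma^{m}$ for some $m\neq 0$. The plan is to show that $g$ can be moved, within the double coset $G(\gamma)\,g\,G$, to an element of bounded word length. Since $G(\gamma)$ is a finite extension of $\langle\gamma\rangle$ (Proposition~\ref{Prop:G(gamma)VirCyc}(1)), this leaves only finitely many possibilities for the conjugate.

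First, record what the hypothesis gives. It says $g^{-1}\gamma^m g\in G$. Hence the Morse element $\gamma':=g^{-1}\gamma^m g$ lies in $G$, and its fixed points $g^{-1}\Lambda(\gamma)$ lie in $\Lambda G$. Equivalently, $\Lambda(\gamma)\subset \Lambda(gG)$, so the orbit $g\langle\gamma'\rangle o = \langle\gamma^m\rangle g o$ lies in $gGo$.

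Next, locate an orbit point of $gGo$ near the axis of $\gamma$. Let $\alpha_\gamma$ be an $N$-Morse axis with $d_{\mathcal H}(\alpha_\gamma,\langle\gamma\rangle o)\le C_1$. The orbit $\langle\gamma^m\rangle go$ lies within Hausdorff distance $d(o,go)+C_1$ of $\alpha_\gamma$, but we want a bound independent of $g$. Use quasiconvexity of $G$ (constant $D$), which transfers to $gG$ acting on $gGo$. Pick points $\gamma^{mk}go$ for $k\to\pm\infty$; these lie in $gGo$. The geodesic joining $\gamma^{-mk}go$ and $\gamma^{mk}go$ lies in $\mathcal N_D(gGo)$. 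By Lemma~\ref{slim-Morse}, Remark~\ref{uniform-H-dis} and the Morse property of $\alpha_\gamma$, its middle part passes within a uniform constant $R=R(N,C_1,D)$ of $o$; this is the Morse-fellow-travelling step. So there is $h\in G$ with $d(o,gho)\le R+D$.

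Now conclude. Replacing $g$ by $gh$ does not change the conjugate $gGg^{-1}$, so we may assume $d(o,go)\le R':=R+D$. By properness, only finitely many $g\in\Gamma$ satisfy this, hence there are only finitely many conjugates. The constant $R$ must not depend on $m$ or $g$. Fortunately $\gamma^m$ and $\gamma$ share the axis $\alpha_\gamma$, which controls this.

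The main obstacle is the uniformity in the middle step. The geodesic $[\gamma^{-mk}go,\gamma^{mk}go]$ has endpoints within $d(o,go)+C_1$ of $\alpha_\gamma$, a distance that depends on $g$. I would handle this by a limiting argument. Pass to the limit $k\to\infty$ to get a bi-infinite geodesic with endpoints $\gamma^\pm$, which lies in $\mathcal N_{D+1}(gGo)$ by the same Arzelà--Ascoli argument as in Lemma~\ref{intersection limit sets}. By Remark~\ref{uniform-H-dis}, any such geodesic is within $C_N$ of $\alpha_\gamma$. Choosing its point nearest $o$ (within $C_1+C_N$) gives the point of $gGo$ within $R'=C_1+C_N+D+1$ of $o$. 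This uses only $N$, $C_1$ and $D$, so it is uniform in $g$ and $m$.
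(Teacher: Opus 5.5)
Your argument is correct and is essentially the paper's proof. The paper builds an axis of $g^{-1}\gamma^{m}g$ inside $\mathcal{N}_{L+1}(Go)$ by the same limiting argument and translates it by $g$, which gives your bi-infinite geodesic in $\mathcal{N}_{D+1}(gGo)$; it then uses Remark~\ref{uniform-H-dis} to find a point of $gGo$ uniformly close to a fixed point of the axis of $\gamma$, replaces $g$ by $gh$ with $h\in G$, and concludes by properness (the appeal to $G(\gamma)$ in your opening plan is never actually needed).
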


\begin{proof}
    Suppose that there exists a sequence of elements $\{g_i\}_{i=1}^{\infty}\subset \Gamma$ such that for any $i$ we have $m_i\in \mathbb{N^*}$ satisfying 
    $\gamma^{m_i}\in g_iGg_i^{-1}$. That is, 
    \[
    g_i^{-1}\gamma^{m_i} g_i\in G.
    \]
    Set $\gamma_i=g_i^{-1}\gamma^{m_i} g_i$. Since $\gamma$ is a Morse element, $\gamma_i$ is also a Morse element. 

     We always fix a basepoint $o\in X$. Since $G$ is quasiconvex, there exists a constant $L>0$ depending only on $G$ and $o$ such that any geodesic between $Go$ lies in the $L$-neighborhood of $Go$. 
     For the fixed Morse element $\gamma_i$, from the proof of Lemma~4.5 in \cite{Liu21}, there exists a sequence of geodesics $\alpha_{i, n}$ that converges uniformly on compact sets to an $N$-Morse bi-infinite geodesic $\alpha_i$ for some Morse gauge $N$ depending only on $\gamma$, where the endpoints of $\alpha_{i, n}$ belong to $Go$ and the geodesic $\alpha_i$ is an axis of $\gamma_i$. Thus, from a similar argument as in Lemma~\ref{intersection limit sets}
     \[
     \alpha_i\subset \mathcal{N}_{L+1}(Go).
     \]
 Let $\alpha$ be an axis of $\gamma$.
     Note that $g_{i}(\alpha_i)$ and $\alpha$ share the same points on the Morse boundary. By Remark~\ref{uniform-H-dis}, there exists a constant $C_N$ depending only on $N$ such that
     $d_{\mathcal{H}}(\alpha, g_{i}(\alpha_i))\le C_N$. 
   Let $p=\alpha(0)$. We have that for any $i$,
     \[
    g_i^{-1}(p) \in \mathcal{N}_{L+1+C_N}(Go). 
     \]
     Thus, there exists $g_i'\in G$ such that
     \[
     d(g_i^{-1}(p), g_i'o)\le L+1+C_N.
     \]
     It implies that $d(p, g_ig_i'o)\le L+1+D_N.$ 
     Let $h_i=g_ig_i'$. Since $g_i'\in G$, we have 
     \[
     h_iGh_{i}^{-1}=g_iGg_i^{-1}.
     \]
     By the properness of the action, the ball $B(p, L+1+C_N)$ contains only finitely many group elements. Consequently, the sequence $\{h_i\}$ can take only finitely many distinct elements. Thus, the corresponding conjugate subgroups $\{g_iGg_i^{-1}\}=\{h_iGh_i^{-1}\}$ also have only finitely many distinct possibilities. This completes the proof. 
    
\end{proof}

For boundary dynamics, the algebraic statement of Lemma~\ref{f.m.c.I} is better reformulated topologically. The next proposition says that if a quasiconvex subgroup accumulates at a point in the limit set of a Morse element, then only finitely many conjugates share that accumulation point. This reformulation will be used later.

\begin{Proposition}\label{f.m.c.II}
Let $\Gamma$ be a group acting geometrically by isometries on a proper geodesic metric space $(X, d)$.
    Suppose that $ G \leq \Gamma$ is quasiconvex, and fix a point $p\in \Lambda(\gamma)$ for some Morse element $\gamma\in \Gamma$.
    Then only finitely many conjugates of $G$ in $\Gamma$ contain $p$ in their limit sets.  
\end{Proposition}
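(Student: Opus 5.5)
The plan is to reduce the statement to Lemma~\ref{f.m.c.I}. Write $\Lambda(\gamma)=\{\gamma^+,\gamma^-\}$ and suppose $p=\gamma^+$. Let $gGg^{-1}$ be a conjugate with $p\in\Lambda(gGg^{-1})$. Since $G$ is quasiconvex, $gGg^{-1}$ is quasiconvex as well, with a different constant. Indeed, its orbit of $o$ is $gG(g^{-1}o)$, which lies within distance $d(o,g^{-1}o)$ of $gGo$, and $gGo$ is $L$-quasiconvex because $g$ is an isometry. The cyclic group $\langle\gamma\rangle$ is quasiconvex because its orbit is a quasi-geodesic with Morse image.

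Apply Lemma~\ref{intersection limit sets} to $gGg^{-1}$ and $\langle\gamma\rangle$. This gives
\[
p\in\Lambda(gGg^{-1})\cap\Lambda(\gamma)=\Lambda\bigl(gGg^{-1}\cap\langle\gamma\rangle\bigr).
\]
A finite subgroup has empty limit set, so $gGg^{-1}\cap\langle\gamma\rangle$ is infinite. Hence $gGg^{-1}$ contains a nontrivial power $\gamma^{m}$ with $m\ge1$. Equivalently, $g^{-1}\gamma^m g\in G$ is a Morse element of $G$.

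Now we want the finiteness conclusion of Lemma~\ref{f.m.c.I}. That lemma as stated assumes $\gamma\in G$. However, its proof only uses that $g_i^{-1}\gamma^{m_i}g_i\in G$ and that $\gamma$ is a Morse element of $\Gamma$. So we rerun that argument directly. We take the axis $\alpha$ of $\gamma$ and an axis $\alpha_i$ of $\gamma_i=g_i^{-1}\gamma^{m_i}g_i$ lying in $\mathcal N_{L+1}(Go)$. By Remark~\ref{uniform-H-dis}, $g_i\alpha_i$ and $\alpha$ are at Hausdorff distance at most $C_N$. From this we translate $g_i$ by an element of $G$ to get $h_i$ with $h_io$ in a fixed ball around $\alpha(0)$, and $h_iGh_i^{-1}=g_iGg_i^{-1}$. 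Properness then gives only finitely many such conjugates.

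The main obstacle is uniformity of the Morse gauge of the axes $\alpha_i$, since the powers $m_i$ vary. I would handle it by noting that every $\gamma_i$ has the same fixed points as $g_i^{-1}\gamma g_i$. Therefore $g_i\alpha_i$ joins $\gamma^-$ to $\gamma^+$, and Remark~\ref{uniform-H-dis} makes it uniformly Morse with gauge depending only on $\gamma$. The gauge is then invariant under the isometry $g_i^{-1}$.

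A second point is that the axis $\alpha_i$ lies near $Go$. Here I would use the convergence argument from Lemma~\ref{intersection limit sets}: approximate $\alpha_i$ by geodesics between points $\gamma_i^{\pm n}o\in Go$, which stay in $\mathcal N_L(Go)$ by quasiconvexity.

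The geometric-action hypothesis enters only to guarantee properness and the existence of Morse axes. The case $p=\gamma^-$ is symmetric.
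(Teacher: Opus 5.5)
Your proposal is correct and follows essentially the paper's route: show that every conjugate $gGg^{-1}$ with $p\in\Lambda(gGg^{-1})$ meets $\langle\gamma\rangle$ in an infinite subgroup, then run the argument of Lemma~\ref{f.m.c.I}. You cite Lemma~\ref{intersection limit sets} directly where the paper re-derives it, and you rightly note that the hypothesis $\gamma\in G$ of Lemma~\ref{f.m.c.I} is never used in its proof, a point the paper glosses over.

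One small repair is needed. Being at bounded Hausdorff distance from the quasiconvex set $gGo$ does not by itself make $gGg^{-1}o$ quasiconvex in a general geodesic space; the paper also just assumes $G_i$ is quasiconvex. You can avoid this by applying Lemma~\ref{intersection limit sets} with basepoint $go$. There the orbit of $gGg^{-1}$ is exactly $gGo$, and $\langle\gamma\rangle go$ is still Morse, hence quasiconvex, because Morse sets are stable under bounded perturbation.
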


\begin{proof}
 We proceed by contradiction. Assume that there exist infinitely many distinct conjugates of $G$ in $\Gamma$, and denote these conjugates by $G_i=g_iGg_i^{-1}$ for some $g_i\in \Gamma$. 
 Without loss of generality, we assume that $p=\gamma^+$, the attracting fixed point of the Morse element \(\gamma\).
   Fix a basepoint $o\in X$. Since $p\in \Lambda G_i$ for each $i$, it follows from an argument analogous to those in Lemma~\ref{intersection limit sets} and Lemma~3.4 in \cite{HL24} that, together with the quasiconvexity of $G_i$ and the fact that \(\gamma\) is a Morse element, there exist a constant \(D_i\), a sequence \(\{g_{i,n}\}_{n=1}^{\infty} \subset G_i\), and a sequence of positive integers \(k_{i,n} \to \infty\) such that
  \[
  d(g_{i,    n}o, \gamma^{k_{i, n}}o)\le D_i.
  \]
  By the properness of the $\Gamma$ action on $X$, we can find two distinct indices $n_1\neq n_2$ satisfying
  \[
  g_{i, n_1}^{-1}\gamma^{k_{i, n_1}}=g_{i, n_2}^{-1}\gamma^{k_{i, n_2}}.
  \]
  Thus, we have a a nontrivial element $g_{i, n_2}g_{i, n_1}^{-1}=\gamma^{k_{i, n_2}-k_{i, n_1}}\in G_i\cap \langle \gamma\rangle.$
 Therefore, each conjugate subgroup \(G_i\) contains a nontrivial power of \(\gamma\). This contradicts Lemma~\ref{f.m.c.I}, which completes the proof.
\end{proof}

\section{Proof of Theorem~\ref{thm:main}}\label{sec:proof}
Let $X$ be a locally finite graph on which $\Gamma$ acts geometrically and let $\Sigma$ be a simplicial tree on which $\Gamma$ acts simplicially. Suppose that the quotient $\Sigma/\Gamma$ is a finite graph and that all edge stabilizers of $\Sigma$ in $\Gamma$ are quasiconvex subgroups. We construct a $\Gamma$-equivariant map $\phi: X\rightarrow \Sigma$ as follow. First, choose a representative vertex $v$ from each \(\Gamma\)-orbit of vertices of $X$, and assign to it an image vertex \(\phi(v)\in\Sigma\); extend this $\Gamma$-equivariantly to all vertices of $X$. Each edge of $X$ is then mapped linearly onto a geodesic segment in $\Sigma$ joining the images of its endpoints. The choices can be arranged so that distinct vertices of \(X\) have disjoint preimages under \(\phi\). Moreover, we may take \(\phi\) to be continuous. Moreover, we may take $\phi$ to be continuous. 

\begin{Lemma}\label{length decreasing}
If $\alpha$ is a geodesic in $X$. Then the length of $\alpha$ is at least the length of $\phi(\alpha)$.
\end{Lemma}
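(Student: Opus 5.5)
The plan is to show that $\phi$ is $1$-Lipschitz along each edge of $X$, after normalising the metric on $\Sigma$ if necessary, and then to add up over the edges that $\alpha$ crosses.

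\emph{Step 1: a uniform bound per edge.} Since $\Gamma$ acts geometrically on the locally finite graph $X$, there are only finitely many $\Gamma$-orbits of edges of $X$. The map $\phi$ is $\Gamma$-equivariant, so the length of the geodesic segment $\phi(e)\subset\Sigma$ depends only on the orbit of the edge $e$. Hence
\[
L\coloneqq\max_{e}\ \operatorname{length}\bigl(\phi(e)\bigr)
\]
is a maximum over finitely many values and is finite.

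\emph{Step 2: normalise so that $L\le 1$.} If $L>1$, I rescale the metric on $\Sigma$ by $1/L$; equivalently, I subdivide each edge of $X$ into $\lceil L\rceil$ edges, which is a quasi-isometry of $X$. Either change leaves the combinatorial tree and the $\Gamma$-action untouched. It also does not affect the Morse boundary or the later separation arguments, which only use the tree structure. I expect this normalisation to be the only real obstacle. Without it the statement can fail, since adjacent vertices of $X$ may be sent to vertices of $\Sigma$ far apart. So I will state explicitly that this normalisation is part of the construction of $\phi$.

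\emph{Step 3: $1$-Lipschitz on edges.} Every edge has length $1$ and is mapped linearly, that is at constant speed, onto a geodesic segment of length at most $1$. So for any subsegment $[x,y]$ of an edge,
\[
\operatorname{length}\bigl(\phi([x,y])\bigr)\le d(x,y).
\]

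\emph{Step 4: sum over the edges of $\alpha$.} A geodesic $\alpha$ in the graph $X$ is a concatenation of finitely many pieces $\alpha_1,\dots,\alpha_k$. Each piece is a full edge, except possibly the first and last, which are partial edges. The image $\phi(\alpha)$ is the concatenation of the paths $\phi(\alpha_i)$. Length is subadditive under concatenation, and I apply Step 3 to each piece. This gives
\[
\operatorname{length}\bigl(\phi(\alpha)\bigr)\le\sum_{i=1}^{k}\operatorname{length}\bigl(\phi(\alpha_i)\bigr)\le\sum_{i=1}^{k}\operatorname{length}(\alpha_i)=\operatorname{length}(\alpha).
\]
For an infinite geodesic I apply the finite case to every finite subpath and take the supremum.
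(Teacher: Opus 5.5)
Your argument is correct. The paper states this lemma without proof, so there is no argument of its own to compare against. Your route is the natural one: $\phi$ is Lipschitz on each edge of $X$, with constant equal to the length of the image segment, so everything reduces to making that constant at most $1$.

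You are right that Step 2 is where the content lies, and it is more than a convenience. The construction of $\phi$ as written in the paper does not guarantee the bound, and the statement can fail for \emph{every} equivariant $\phi$. Take $F_2=\langle a\rangle*\langle b\rangle$ acting on its Bass--Serre tree, and let $X$ be the Cayley graph for the generating set $\{a,b,ab\}$. Then $ab$ acts hyperbolically with translation length $2$. So $d_\Sigma(\phi(1),\phi(ab))=d_\Sigma(\phi(1),ab\,\phi(1))\ge 2$, while the edge from $1$ to $ab$ is a geodesic of length $1$. The repair therefore has to come from the choice of $X$, or from the metric on $\Sigma$, not from the choice of $\phi$.

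Of your two normalisations, subdividing $X$ fits the rest of the paper better. This amounts to choosing $X$ suitably, which is harmless because $\partial_*\Gamma$ does not depend on $X$. It keeps unit edge lengths in $\Sigma$, so the midpoints $m(e)$ and the tree distance $m$ in Lemma~\ref{actually moving away} keep their meaning. The new vertices of $X$ need not map to vertices of $\Sigma$, which is consistent with that lemma explicitly assuming $\phi(v_1)$ and $\phi(v_2)$ are vertices. Rescaling $\Sigma$ by $1/L$ also works, but it changes the units in the later estimates. One example is the bound $2t-2$ in Proposition~\ref{embedding}. Only uniformity of such constants is used, so your remark that nothing downstream is affected is right in substance.

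Once $\phi$ is $1$-Lipschitz, Lemma~\ref{actually moving away} follows immediately from this one. A path from a point of $\phi^{-1}(\phi(v_1))$ to $v_2$ maps to a path in $\Sigma$ from $\phi(v_1)$ to $\phi(v_2)$, which has length at least $m$.
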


\begin{Lemma}\label{actually moving away}
    Let $\alpha$ be a geodesic in \(X\) connecting vertices $v_1, v_2\in X$. Suppose $\phi(v_1)$ and $\phi(v_2)$ are vertices of $\Sigma$ that are $m$ apart. Then $v_2$ is at least $m$ away from $\phi^{-1}(\phi(v_1))$.    
\end{Lemma}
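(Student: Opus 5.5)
The plan is to reduce the statement directly to Lemma~\ref{length decreasing}, using that $\phi$ is continuous and that $\Sigma$ is a tree. The geodesic $\alpha$ itself plays no role beyond fixing the endpoints $v_1,v_2$. What we must show is that every point $w\in\phi^{-1}(\phi(v_1))$ satisfies $d(w,v_2)\ge m$. Taking the infimum over such $w$ then gives $d\bigl(v_2,\phi^{-1}(\phi(v_1))\bigr)\ge m$.

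Fix such a point $w$; it may be a vertex or an interior point of an edge of $X$. Since $X$ is a connected, locally finite graph, it is a proper geodesic space. Hence there is a geodesic $\beta$ in $X$ from $w$ to $v_2$, and its length is $d(w,v_2)$.

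Because $\phi$ is continuous, $\phi(\beta)$ is a path in $\Sigma$ from $\phi(w)=\phi(v_1)$ to $\phi(v_2)$. In the tree $\Sigma$, any path between two points has length at least their distance, since it must traverse the unique geodesic between them. Therefore the length of $\phi(\beta)$ is at least $d_\Sigma(\phi(v_1),\phi(v_2))=m$.

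Applying Lemma~\ref{length decreasing} to the geodesic $\beta$ gives
\[
d(w,v_2)=\operatorname{length}(\beta)\ \ge\ \operatorname{length}(\phi(\beta))\ \ge\ m .
\]

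The only point needing care is making sure Lemma~\ref{length decreasing} applies to $\beta$ even when $w$ is not a vertex. Its statement is for arbitrary geodesics of $X$, so this is fine. If one prefers to work only with vertices, one can instead note that $\phi$ is linear on edges. A point $w$ in the interior of an edge can map to the vertex $\phi(v_1)$ only if that edge maps onto a segment passing through $\phi(v_1)$, and one can run the same estimate from the appropriate endpoint. I expect no real obstacle here; the argument is essentially a one-line consequence of the length-decreasing property combined with the tree metric.
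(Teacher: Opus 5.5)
Your proof is correct. The paper states this lemma without proof, as it also does for Lemma~\ref{length decreasing} just before it, so there is no argument of the authors to compare with; your reduction is evidently the intended one. For $w\in\phi^{-1}(\phi(v_1))$, a geodesic $\beta$ from $w$ to $v_2$ is sent to a path from $\phi(v_1)$ to $\phi(v_2)$, which has length at least $m$. Lemma~\ref{length decreasing} then gives $d(w,v_2)\ge m$. For path length the bound $\ge m$ holds in any metric space; the tree structure is only needed if ``length of $\phi(\beta)$'' means the length of the image set. You are also right that the geodesic $\alpha$ in the hypothesis plays no role.

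One caveat: your remark that you expect no real obstacle is true only because all the content sits in Lemma~\ref{length decreasing}. That lemma requires $\phi$ to be $1$-Lipschitz, and the construction as described does not guarantee this. An edge of $X$ whose endpoints go to vertices at distance $k>1$ is stretched by the factor $k$.

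For example, let $\mathbb{Z}=\langle t\rangle$ act on a line by translation of length $2$ (the Bass--Serre tree of a two-vertex, two-edge graph of trivial groups), with $X$ the Cayley graph for $\{t\}$. Then $v_2=tv_1$ gives $m=2$ but $d\bigl(v_2,\phi^{-1}(\phi(v_1))\bigr)=1$, so the statement itself fails. Some normalization, such as rescaling the metric on $X$ or subdividing its edges, is implicitly needed.

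If $\phi$ is merely $L$-Lipschitz, your argument gives $d\bigl(v_2,\phi^{-1}(\phi(v_1))\bigr)\ge m/L$. That constant factor is harmless for the use in Proposition~\ref{embedding}.
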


Let $E(\Sigma)$ and $V(\Sigma)$ be the sets of edges and vertices, respectively. For any $e\in E(\Sigma)$, define $Q(e)=\phi^{-1}(m(e))$, where $m(e)$ is the midpoint of $e$. Note that $Q(e)$ is quasi-isometric to the stabilizer \(\Gamma(e)\) of \(e\) in \(\Gamma\); consequently, \(Q(e)\) is quasiconvex. 

We now show that points of the Morse boundary not lying in the limit set of any vertex stabilizer correspond to directions escaping to infinity in the tree $\Sigma$. The following proposition makes this precise.

\begin{Proposition}\label{embedding}
The set $\partial_*\Gamma\setminus\bigcup_{v\in V(\Sigma)}\Lambda\Gamma(v)$ admits a natural injection into $\partial \Sigma$.  
\end{Proposition}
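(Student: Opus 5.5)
The map will be defined by pushing a Morse ray forward through $\phi$. Fix $p\in\partial_*\Gamma\setminus\bigcup_{v}\Lambda\Gamma(v)$ and an $N$-Morse geodesic ray $\alpha$ from $o$ representing $p$, where we may take $o$ to be a vertex of $X$. By Lemma~\ref{length decreasing}, $\phi\circ\alpha$ is a $1$-Lipschitz path in $\Sigma$. The plan is to show that $\phi(\alpha(t))$ leaves every finite subtree and converges to a single end $\xi_p\in\partial\Sigma$. We then set $\Psi(p)=\xi_p$, check that this does not depend on the choice of $\alpha$, and prove that $\Psi$ is injective.

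\textbf{Step 1: $\phi\circ\alpha$ converges to an end.} Suppose not. Then there is an edge $e$ whose midpoint $m(e)$ is crossed by $\phi\circ\alpha$ at arbitrarily large times, or else $\phi\circ\alpha$ stays in a bounded subtree along an unbounded set of times. In the second case $\alpha$ returns infinitely often, at unbounded distance from $o$, to $\phi^{-1}$ of a bounded region. This region is a finite union of the sets $\phi^{-1}(v)$ and $Q(e)$, and each of these lies within bounded distance of a coset orbit $g\Gamma(v)o$. Passing to a subsequence, $\alpha(t_n)$ stays within a fixed distance of some fixed $g\Gamma(v)o = \Gamma(gv)\,g o$, and therefore $p\in\Lambda\Gamma(gv)$. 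This is the same argument as in Lemma~\ref{intersection limit sets}: points within bounded distance of a Morse ray converge to its endpoint.

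In the first case $\alpha$ meets $Q(e)$ infinitely often, so $p\in\Lambda\Gamma(e)\subseteq\Lambda\Gamma(v)$ for $v$ an endpoint of $e$, which is again a contradiction. Hence for every finite subtree $F$, $\phi\circ\alpha$ eventually lies in a single complementary component, since crossing between components requires passing through a midpoint of an edge of $F$. These components are nested, so they determine an end $\xi_p$.

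If $\alpha'$ is another ray representing $p$, it lies at finite Hausdorff distance from $\alpha$. Since $\phi$ is coarsely Lipschitz by Lemma~\ref{length decreasing}, the images lie at bounded distance from each other and determine the same end. So $\Psi$ is well defined.

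\textbf{Step 2: injectivity, the main obstacle.} Let $p\neq q$ both have image $\xi$. Take an $N$-Morse bi-infinite geodesic $\beta$ joining $p$ and $q$; its existence follows from the slim-triangle Lemma~\ref{slim-Morse} applied to the rays from $o$. Its two halves push forward, as in Step 1, to paths converging to the same end $\xi$. Then $\phi\circ\beta$ is a path in a tree both of whose ends go to $\xi$, so it has a point $z=\phi(\beta(s_0))$ of minimal distance to a fixed vertex along the ray toward $\xi$.

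Choose an edge $e_n$ on the ray to $\xi$ at distance $n$ beyond $z$. Both halves of $\beta$ must cross $m(e_n)$, so $\beta$ meets $Q(e_n)$ at two points $x_n$ and $y_n$, lying on opposite sides of $\beta(s_0)$ and at distance at least $n$ from $\beta(s_0)$ by Lemma~\ref{actually moving away}. Now $Q(e_n)$ is uniformly quasiconvex, being a translate of one of finitely many $Q(e)$. Together with the Morse property of $\beta$, this means the segment $[x_n,y_n]$ stays uniformly close to $Q(e_n)$, and that segment passes within bounded distance of $\beta(s_0)$. Hence $\beta(s_0)$ is uniformly close to $Q(e_n)$, so $\phi(\beta(s_0))$ is uniformly close to $m(e_n)$ in $\Sigma$. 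This is impossible for large $n$.

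The delicate point is the last implication. Quasiconvexity of $Q(e_n)$ holds only in the orbit sense, and $\phi$ is only coarsely controlled near $Q(e_n)$. Both can be handled by the cocompactness of the action and the finiteness of $\Sigma/\Gamma$, which make all the constants uniform in $n$.
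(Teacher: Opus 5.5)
Your argument is sound in outline, but one sentence in Step 1 is false as written. A bounded region of $\Sigma$ is not a finite union of vertices and edges: Bass--Serre trees are typically locally infinite (e.g.\ for $A*_C B$ with $[A:C]=\infty$), so your pigeonhole onto a single coset $g\Gamma(v)o$ does not go through.

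Replace it by descending branches. If $\phi\circ\alpha$ does not pass through the vertex $w_0=\phi(o)$ at arbitrarily large times, it eventually lies in one component of $\Sigma\setminus\{w_0\}$, entered through a neighbour $w_1$. It cannot get stuck in an open edge, since vertices of $X$ map to vertices of $\Sigma$. Repeat at $w_1$, and so on. Either this stops at a vertex $v$ passed through at arbitrarily large times, whence $p\in\Lambda\Gamma(v)$ by your coset argument, or it produces a ray $w_0w_1w_2\cdots$ whose end is the limit of $\phi\circ\alpha$. This gives your dichotomy directly. It also replaces your claim about ``midpoints of edges of $F$'', which fails when two complementary components hang off the same vertex of $F$. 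The paper's corresponding step, that a bounded $\phi(\alpha)$ passes through some vertex infinitely often, is equally unjustified.

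Otherwise your route differs from the paper's in both steps. In Step 1, the paper first derives a uniform backtracking bound for $\phi\circ\alpha$ from Lemma~\ref{actually moving away} and quasiconvexity of $Q(e)$, concludes ``bounded or convergent'', and only then treats the bounded case; you rightly skip this bound. For injectivity, the paper keeps the rays $\alpha_1,\alpha_2$ from $o$ and takes $x\in[p_1,p_2]$ that is $\delta_N$-close to both. It picks $x_{i,k}\in Q(e_k)\cap\alpha_i$ and uses a slim-triangle ``standard geometric argument'' to show $[x_{1,k},x_{2,k}]$ passes within $C$ of $x$.

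Working on the bi-infinite geodesic $\beta$ gives you that for free: the two crossing points straddle $\beta(s_0)$, so the subsegment of $\beta$ between them contains it. The only remaining input is that this uniformly Morse subsegment (Lemma~\ref{subpth}) is uniformly Hausdorff-close to a geodesic joining nearby points of the orbit $g\Gamma(e)o$ (with $e_n=ge$), and that geodesic is $D$-close to the orbit. This also makes explicit why orbit quasiconvexity suffices, which the paper glosses over by treating $Q(e_k)$ as quasiconvex for arbitrary geodesics.

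Note that it is this Morse fellow-travelling, not cocompactness, that resolves the ``orbit sense'' issue you flag at the end. Cocompactness and finiteness of $\Sigma/\Gamma$ only make the constants uniform in $n$. The minimality of $z$ is never used; any $s_0$ works. The paper's version stays with rays based at $o$, matching the definition of the map, and yields the backtracking bound as a by-product; yours is shorter and more self-contained.
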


 \begin{proof}
     Suppose $\alpha$ is an $N$-Morse geodesic ray in $X$ and let $\alpha(0)=o$. Suppose $\phi(\alpha)$ leaves a vertex $v$, travels a distance $t$, and then comes back to $v$. Then $\phi(\alpha)$ goes through an edge $e$ adjacent to $v$ twice. Hence $\alpha$ passes through $Q(e)$ twice. Then by Lemma \ref{actually moving away} the length of this subpath $\alpha'$ of $\alpha$ is at least $2t-2$. Note that $\alpha'$ is in a $D$-neighborhood of $Q(e)$, where $D$ is a uniform constant. 
     On the other hand, by Lemma \ref{actually moving away}, walking along $\alpha'$ actually moves away from $Q(e)$ by a distance at least as big as $t-1$. Hence $t$ is uniformly bounded. 
     Thus, either $\phi(\alpha)$ converges to some ideal point in $\partial \Sigma$, or else remains within a bounded subset of $\Sigma$. In the latter case, $\phi(\alpha)$ passes through some $v\in V(\Sigma)$ infinitely many times. This implies that there exists a sequence $\{g_n\}\subset\Gamma(v)$ and its orbits $\{g_no\}$ lie in a uniform neighborhood $D$ of $\alpha$ and $d(o, g_no)\to \infty$ as $n\to \infty$. By a similar argument in Lemma~\ref{intersection limit sets}, each geodesic $\alpha_n=[o, g_no]$ is $N_1$-Morse for some Morse gauge $N_1=N_1(D, N)$ and $\alpha(\infty)=\lim_{n\to \infty} g_no\in \Lambda\Gamma(v)$. Therefore we obtain  a natural map from $\partial_*\Gamma\setminus\bigcup_{v\in V(\Sigma)}\Lambda\Gamma(v)$ to $\partial \Sigma$. 

     We now show that this map is well-defined. It suffices to show that if two Morse geodesic rays $\alpha_1$ and $\alpha_2$ in $X$ are bounded distance apart, then $\phi(\alpha_1)$ and $\phi(\alpha_2)$ are also bounded distance apart. Note that this follows from Lemma \ref{length decreasing}. 
     
     Finally, we show that the map is injective. Let $\alpha_1$ and $\alpha_2$ be two Morse geodesic rays originating from the same point $o$ such that $\phi(\alpha_1)$ and $\phi(\alpha_2)$ are at a bounded distance apart and they converge to a point $y\in \partial\Sigma$. Then there is a sequence of adjacent vertices $v_1, v_2, \dots$ converging to $y$ such that both $\phi(\alpha_1)$ and $\phi(\alpha_2)$ pass through all these vertices. 
     
     Suppose that $\alpha_1$ and $\alpha_2$ are N-Morse with respect to some Morse gauge $N$. Let $p_i\in\partial_*\Gamma$ denote the boundary point defined by $\alpha_i$ for $i=1, 2$. We argue by contradiction and assume $p_1\neq p_2$. 
     By Lemma~\ref{slim-Morse}, the geodesic $[p_1, p_2]$ is $N'$-Morse, and there exist points $x\in [p_1, p_2], x_1\in \alpha_1$ and $x_2\in \alpha_2$ satisfying
     \[
     d(x, x_1), d(x, x_2)\le \delta_N,
     \] where $N'$ and $\delta_N$ depend only on $N$. 
     
    For each $k$, since $\phi(\alpha_1)$ and $\phi(\alpha_2)$ both pass through $v_k$, each passes through an edge $e_k$ adjacent to $v_k$. Consequently, $\alpha_1$ and $\alpha_2$ both pass through $Q(e_k)$. Let $x_{i,k}$ be the point in $Q(e_k)\cap \alpha_i$ closest to $x$ for $i=1,2$. From Lemma~\ref{subpth}, the geodesics $[x_i, p_i]_{\alpha_i}$ are $N_1$-Morse, while $[x, p_i]$ is $N_2$-Morse for $i=1, 2$; here the Morse gauges satisfy $N_1=N_1(N)$ and $N_2=N_2(N')$. Applying Lemma~\ref{slim-Morse}, each triangle $\triangle(p_i, x_i, x)$ are $\delta_1$-slim, where $\delta_1=\delta_1(N_1, N_2)$. 
    
    Choose $v_k$ so that $d(v_k, \phi(x))$ is sufficiently large to ensure both
\[
x_{i,k} \in [x_i,p_i]_{\alpha_i} \quad \text{and} \quad d(x_{i,k}, [x,p_i]) \le \delta_1 \quad (i=1,2),
\]
    and in particular satisfying $d(v_k,\phi(x)) > C+D$. The constant $C$, which depends only on $N$, will be constructed below. 
    
    Combining Lemma~\ref{subpth} and Lemma~\ref{slim-Morse}, the geodesic segment $[x_{1,k}, x_{2, k}]$ is $N_3$-Morse for some gauge $N_3=N_3(N)$. A standard geometric argument then gives a positive constant $C$ depending only on $N_3$ and $\delta_1$, such that 
    \[
    d(x, [x_{1,k}, x_{2, k}])\le C.
    \] It follows immediately that $C$ depends only on $N$. 
    We now combine the $D$-quasiconvexity of $Q(e_k)$ and the condition that $d(\phi(x), v_k)>C+D$; this produces a contradiction. We therefore conclude $p_1=p_2\in \partial_*X$. 
\end{proof}

The following proposition is a well-known result of Bowditch\cite[Proposition~1.2]{Bowditch98}: in a finite graph of groups, quasiconvexity of edge groups implies quasiconvexity of vertex groups. We include it here without proof. 

\begin{Proposition}\label{v-qc}
   Let \(\Gamma\) be a finitely generated group that splits as a finite graph of groups. If every edge group is quasiconvex in \(\Gamma\), then every vertex group is also quasiconvex.
\end{Proposition}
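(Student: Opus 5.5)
The statement to prove is Proposition~\ref{v-qc}: quasiconvexity of the edge groups forces quasiconvexity of the vertex groups. I would work with the Bass--Serre tree $\Sigma$ and the equivariant map $\phi\colon X\to\Sigma$ constructed above. Fix a vertex $v\in V(\Sigma)$. The goal is a constant $D'$ such that every geodesic $\alpha$ in $X$ joining $g_1o$ and $g_2o$, with $g_1,g_2\in\Gamma(v)$, lies in $\mathcal{N}_{D'}(\Gamma(v)o)$. We may choose $\phi$ so that $\phi(o)=v$ and $\phi(\Gamma(v)o)=\{v\}$.

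The first step is to control excursions of $\phi(\alpha)$ away from $v$. Suppose $\alpha$ is a segment of $\alpha$ on which $\phi$ stays in a component of $\Sigma\setminus\{v\}$ through an edge $e$ adjacent to $v$. Then both endpoints of this segment lie within bounded distance of $Q(e)$, because $\phi(\alpha)$ must cross $m(e)$ on leaving $v$ and again on returning. Since $Q(e)$ is quasi-isometric to $\Gamma(e)$, it is $D$-quasiconvex with a uniform $D$; there are only finitely many edge orbits, so $D$ can be taken uniform. Hence the geodesic subsegment lies in $\mathcal{N}_{D}(Q(e))$. This is essentially the same estimate used at the start of the proof of Proposition~\ref{embedding}.

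The second step is to show that $Q(e)$ is at bounded Hausdorff distance from a subset of $\Gamma(v)o$. Since $\Gamma(e)\le\Gamma(v)$, we have $Q(e)\subset\mathcal{N}_{R}(\Gamma(e)o)\subset\mathcal{N}_R(\Gamma(v)o)$. Here $R$ depends only on the edge orbit, hence is uniform.

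The third step handles the portions of $\alpha$ whose $\phi$-image is near $v$ itself, namely in the closed star of $v$. Points of $X$ mapping into a bounded neighbourhood of $v$ lie within bounded distance of $\Gamma(v)o$. This holds because $\phi$ is $\Gamma$-equivariant, the action on $X$ is cocompact, and $\Gamma(v)$ acts cocompactly on $\phi^{-1}(\mathrm{star}(v))$. Only finitely many $\Gamma(v)$-orbits of edges at $v$ matter up to the finite quotient, although infinitely many edges are adjacent to $v$. Combining the three steps gives $\alpha\subset\mathcal{N}_{D+R+R'}(\Gamma(v)o)$.

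The main obstacle is the third step. The local finiteness of $\Sigma$ fails in general, and the preimage of the star of $v$ need not be a finite union of translates. I would first argue via the fundamental domain: each vertex $x$ of $X$ with $\phi(x)=w$ has $x\in hF$ for a finite set $F$, and then $\phi(hF)\ni w$. If $w=v$, then $h$ lies in a finite union of cosets $\Gamma(v)f$, which gives bounded distance to $\Gamma(v)o$. If the point only lies on an edge adjacent to $v$, I reduce to the midpoint case $Q(e)$, which is already handled by the second step.
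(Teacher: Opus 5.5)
The paper gives no proof of this proposition; it cites Bowditch's Proposition~1.2, which is a statement about hyperbolic groups. Your three steps are the standard argument behind that result. There is, however, a genuine gap in your first step: you assert that $Q(e)$ is uniformly quasiconvex ``since $Q(e)$ is quasi-isometric to $\Gamma(e)$''. Quasi-isometry type says nothing about how $Q(e)$ sits inside $X$. What you actually know is that $Q(ge_j)=gQ(e_j)$ lies at bounded Hausdorff distance from the coset orbit $g\Gamma(e_j)o$. The hypothesis, however, only constrains geodesics whose endpoints lie \emph{on} such an orbit, and $Q(e)$ contains no vertex of $X$ at all. Passing to geodesics whose endpoints are merely $R$-close to the orbit is automatic when $X$ is hyperbolic (Bowditch's setting) or $\mathrm{CAT}(0)$. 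It fails for the paper's definition in a general proper geodesic space.

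Here is an example. Take the Cayley graph of $\mathbb{Z}^2=\langle h,k\rangle$. Attach to each vertex $g$ a new vertex $gq$ by an edge, and join $gq$ to $ghkq$ and to $ghk^{-1}q$. The $h$-coordinate is $1$-Lipschitz and the attaching edges do not change it, so $\langle h\rangle\cdot 1$ is geodesically convex; hence $\langle h\rangle$ is quasiconvex. Now consider the HNN splitting of $\mathbb{Z}^2$ over $\langle h\rangle$, with $\phi$ recording the $k$-coordinate. Then $Q(e)$ contains the midpoints of the diagonal edges between levels $0$ and $1$. Consider the diagonal path from the midpoint of $[q,hkq]$ up through $(hk)^mq$ and back down to the midpoint of $[h^{2m}q,h^{2m+1}kq]$. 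It has length $2m$, and by the $h$-coordinate it is a geodesic. It reaches distance about $m$ from $Q(e)$, so $Q(e)$ is not quasiconvex. Here the conclusion of the proposition happens to be trivial, but the step you rely on is false.

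So in the stated generality your argument does not close. The same applies to the paper's own unproved remark before Proposition~\ref{embedding} that $Q(e)$ is quasiconvex. You need a hypothesis that makes quasiconvexity stable under bounded perturbation of endpoints. One that suffices, and that holds in Theorem~\ref{thm:main}, is that the edge groups are Morse. Suppose $\Gamma(e_j)o$ is $N$-Morse, and $\sigma$ is a geodesic from $x$ to $y$ with $d(x,a),d(y,b)\le R$ for some $a,b\in g\Gamma(e_j)o$. Then $[a,x]\cup\sigma\cup[y,b]$ is a $(1,4R)$-quasi-geodesic with endpoints on the $N$-Morse set $g\Gamma(e_j)o$. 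Hence $\sigma\subset\mathcal{N}_{N(1,4R)}(g\Gamma(e_j)o)$, uniformly in $g$, and with this your steps go through. Even then, cut each excursion at its first and last points \emph{on} $Q(e)$, not at points merely near it.

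Two smaller corrections:
\begin{itemize}
\item In the second step, the uniform statement is $Q(ge_j)\subset\mathcal{N}_R(g\Gamma(e_j)o)\subset\mathcal{N}_R(\Gamma(v)o)$ for $g\in\Gamma(v)$. It is not $Q(e)\subset\mathcal{N}_R(\Gamma(e)o)$: the orbit $\Gamma(e)o=g\Gamma(e_j)g^{-1}o$ is only within $d(o,g^{-1}o)$ of $g\Gamma(e_j)o$, which is not uniform in $g$.
\item In the third step, the claim that points over a bounded neighbourhood, or over the closed star, of $v$ lie near $\Gamma(v)o$ is false when $\Sigma$ is not locally finite, since points over a neighbour $w$ lie near $\Gamma(w)o$. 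Your repair in the last paragraph is the right one. Restrict to the half-star $\bigcup_{e\ni v}[v,m(e)]$, and note that an edge of $X$ whose image meets the interior of $[v,m(e)]$ has image containing all of $e$, so it meets $Q(e)$.
\end{itemize}
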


Proposition~\ref{embedding} provides an injection of a large portion of the Morse boundary into the boundary of the Bass–Serre tree. We are now ready to prove the main theorem. 

\begin{Theorem}[Main Theorem]
Suppose a finitely generated group $\Gamma$ with connected Morse boundary $\partial_*\Gamma$ admits a splitting as a finite graph of groups. For every edge $e$, let the edge group $\Gamma(e)$ be a finite-index subgroup of $G(\gamma_e)$ for some Morse element $\gamma_e$. Then $\partial_*\Gamma\setminus\Lambda\Gamma(e)$ is disconnected.
\end{Theorem}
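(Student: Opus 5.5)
Fix an edge $e$ of the Bass--Serre tree $\Sigma$ and let $\Sigma_1,\Sigma_2$ be the two components of $\Sigma\setminus\{m(e)\}$. The plan is to write $\partial_*\Gamma\setminus\Lambda\Gamma(e)$ as a disjoint union $U_1\sqcup U_2$ of two nonempty open sets, where $U_i$ consists of the boundary points ``living on the $\Sigma_i$ side''.

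\textbf{Setting up the sets.} Since $\Gamma(e)$ has finite index in $G(\gamma_e)$, Proposition~\ref{Prop:G(gamma)VirCyc} shows that $\Gamma(e)$ is virtually cyclic and generated up to finite index by a Morse element. It is therefore quasiconvex, with $\Lambda\Gamma(e)=\Lambda(\gamma_e)$ consisting of two points. Proposition~\ref{v-qc} then makes all vertex groups quasiconvex. Every point $p\in\partial_*\Gamma\setminus\Lambda\Gamma(e)$ is represented by an $N$-Morse ray $\alpha$ from $o$, where we choose $o$ with $\phi(o)\in\Sigma_1$. The argument in the proof of Proposition~\ref{embedding} shows that $\phi(\alpha)$ cannot cross $m(e)$ back and forth with unbounded excursions. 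Hence $\phi(\alpha)$ eventually stays in one of $\Sigma_1$ or $\Sigma_2$.

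The key claim is that, because $p\notin\Lambda Q(e)=\Lambda\Gamma(e)$, the ray $\alpha$ meets $Q(e)$ only in a bounded initial segment. If it met $Q(e)$ at arbitrarily large times, quasiconvexity of $Q(e)$ together with the argument of Lemma~\ref{intersection limit sets} would place $p$ in $\Lambda\Gamma(e)$. We then define $U_i$ to be the set of points whose rays eventually have $\phi$-image in $\Sigma_i$. This is independent of the representative ray, by Lemma~\ref{length decreasing} and the fact that rays at bounded distance eventually lie on the same side of the quasiconvex set $Q(e)$ (which separates $X$ coarsely into $\phi^{-1}(\Sigma_1)$ and $\phi^{-1}(\Sigma_2)$).

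\textbf{Nonemptiness.} We use connectedness of $\partial_*\Gamma$ and the hypothesis that it has more than two points. The set $\partial_*\Gamma\setminus\Lambda\Gamma(e)$ is nonempty, and by $\Gamma$-equivariance it suffices to find points on both sides. Take a Morse element $g$ whose axis crosses $e$ in the tree, for instance a hyperbolic element of the action on $\Sigma$ that is Morse. Its endpoints $g^{\pm}$ lie in $U_1$ and $U_2$ respectively, and they avoid $\Lambda(\gamma_e)$ by the $\{0,2\}$ intersection proposition. If such an element is hard to produce, I would argue as follows. Connectedness forbids $\partial_*\Gamma$ from being concentrated in the limit set of a single vertex group, since that limit set is a proper closed subset. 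Applying translates of vertex stabilizers on the two sides of $e$ then yields points in both $U_1$ and $U_2$.

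\textbf{Openness, the main obstacle.} In the direct limit topology, it suffices to show that $U_i\cap\partial_*^N\Gamma$ is relatively open in $\partial_*^N\Gamma\setminus\Lambda\Gamma(e)$ for each refined $N$. Take $p\in U_1\cap\partial_*^N\Gamma$ with $N$-Morse ray $\alpha$. By the key claim, $\alpha|_{[T,\infty)}$ stays at distance greater than $R$ from $Q(e)$ for a suitable $T$, where $R$ is chosen depending on $N$ and the quasiconvexity constant of $Q(e)$. Compact-open convergence means that nearby $N$-Morse rays $\beta$ fellow-travel $\alpha$ on $[0,T']$ for large $T'$. If $\beta$ subsequently ended in $\Sigma_2$, it would have to cross $Q(e)$ at some late time. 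Following the last paragraph of the proof of Proposition~\ref{embedding}, slim triangles (Lemma~\ref{slim-Morse}) and the Morse property would force $\beta$ to run near $Q(e)$ for a long stretch. By Lemma~\ref{intersection limit sets}-type arguments and compactness of $\partial_*^N\Gamma$, a sequence of such $\beta_n\to p$ would then give $p\in\Lambda Q(e)$, a contradiction.

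The delicate point is obtaining this uniformly within $\partial_*^N\Gamma$. I would argue by contradiction with a sequence $\beta_n\to p$ in $U_2$, pass to the crossing points $y_n\in\beta_n\cap Q(e)$ with $d(o,y_n)\to\infty$, and show that $y_n\to p$, so that $p\in\Lambda\Gamma(e)$.
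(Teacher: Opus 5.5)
Your partition of $\partial_*\Gamma\setminus\Lambda\Gamma(e)$ and your openness argument are sound, and they take a somewhat different route from the paper. You define $U_i$ as the set of points whose Morse rays eventually lie in $A_i=\phi^{-1}(\Sigma_i)$. This is justified by your key claim, which the same argument upgrades to: such a ray eventually leaves every bounded neighbourhood of $Q(e)$. With this definition disjointness is automatic, and you bypass the tree-boundary injection of Proposition~\ref{embedding} and the use of Lemma~\ref{intersection limit sets} that the paper needs. Your sequential openness argument also works: the crossing points $y_n$ lie on the $\beta_n$ themselves, so the initial segments of $\beta_n$ ending at $y_n$ are uniformly Morse and converge to a ray representing $p$. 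This is exactly the closedness of $U_2\cup\Lambda\Gamma(e)=\Lambda A_2$ that the paper invokes.

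The genuine gap is nonemptiness of both $U_i$. Your first suggestion presupposes a Morse element acting hyperbolically on $\Sigma$ with axis through $e$. You never produce one, and its existence is not automatic. Your fallback is not an argument. You do not justify $\Lambda\Gamma(v)\neq\partial_*\Gamma$, and even granting it, a point outside $\Lambda\Gamma(v_1)$ need not be eventually in $A_2$: it could lie in $\Lambda\Gamma(w)$ for another $w\in V(\Sigma_1)$, or correspond to an end of $\Sigma_1$. Connectedness of $\partial_*\Gamma$ plays no role in this step.

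The missing idea is to translate the edge limit set by a vertex stabilizer on the relevant side. Using that $\Gamma$ is not virtually cyclic (as $\partial_*\Gamma$ has more than two points), the paper picks $v\in V(\Sigma_i)$ and $g\in\Gamma(v)\setminus G(\gamma_e)$.
\begin{itemize}
\item Since $g\notin\Gamma(e)$, we have $ge\neq e$. As $g$ fixes $v$, the midpoints $m(e)$ and $m(ge)$ are equidistant from $v$. So the geodesic from $v$ to $m(ge)$ avoids $m(e)$, and $ge\subset\Sigma_i$.
\item Hence $g\Lambda\Gamma(e)=\Lambda\Gamma(ge)\subseteq\Lambda\Gamma(w)$ for an endpoint $w\in V(\Sigma_i)$ of $ge$.
\item By quasiconvexity of $\Gamma(w)$ (Proposition~\ref{v-qc}) together with your key claim, every point of $\Lambda\Gamma(w)\setminus\Lambda\Gamma(e)$ is eventually in $A_i$, i.e.\ lies in $U_i$.
\item Finally, $g\Lambda\Gamma(e)=\Lambda(g\gamma_eg^{-1})$ is disjoint from $\Lambda\Gamma(e)=\Lambda(\gamma_e)$ by the proposition that $|\Lambda(\gamma_1)\cap\Lambda(\gamma_2)|\in\{0,2\}$, since equality would force $g\in G(\gamma_e)$.
\end{itemize}
Thus $g\gamma_e^{\pm}\in U_i$ for $i=1,2$, and both sets are nonempty.
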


\begin{proof}
 We fix an edge $e\in E(\Sigma)$. Then its midpoint $m(e)$ splits $\Sigma\cup\partial\Sigma$ into two components $\Sigma_1\cup\partial\Sigma_1$ and $\Sigma_2\cup\partial\Sigma_2$. 
 Let
 \[U_i=\pi^{-1}(\partial\Sigma_i)\cup\bigcup_{v\in V(\Sigma_i)}\Lambda\Gamma(v)\setminus\Lambda\Gamma(e),\]
 where $\pi:\partial_*\Gamma\setminus\bigcup_{v\in V(\Sigma)}\Lambda\Gamma(v)\hookrightarrow\partial\Sigma$ is the natural injective map in Proposition~\ref{embedding}.

Take $v_1\in V(\Sigma_1)$ and $v_2\in V(\Sigma_2)$, then $\Gamma(v_1)\cap\Gamma(v_2)$ fixes the geodesic $\alpha$ between $v_1$ and $v_2$ in $\Sigma$. Note that $e\in \alpha$, so we have $\Gamma(v_1)\cap\Gamma(v_2)\subseteq\Gamma(e)$. By Proposition~\ref{v-qc}, all vertex groups $\Gamma(v)$ are quasiconvex. Thus, by Lemma~\ref{intersection limit sets} $\Lambda\Gamma(v_1)\cap\Lambda\Gamma(v_2)=\Lambda(\Gamma(v_1)\cap\Gamma(v_2))\subseteq\Lambda\Gamma(e)$.

Note that $\Lambda\Gamma(e)=\{\gamma_e^+, \gamma_e^-\}$.
Thus, either $\Lambda(\Gamma(v_1)\cap\Gamma(v_2))=\emptyset$ or there exists a Morse element $\gamma\in \Gamma(v_1)\cap\Gamma(v_2)$ using the similar argument in Proposition~\ref{f.m.c.II}. 
It follows that in the later case $\Lambda\Gamma(v_1)\cap\Lambda\Gamma(v_2)=\Lambda \Gamma(e)$. By Proposition~\ref{f.m.c.II}, there are only finitely many $v\in V(\Sigma)$ such that $\Lambda \Gamma(e)\subset \Lambda\Gamma(v)$.
 Together with Proposition~\ref{embedding}, we get a natural partition  $\partial_*\Gamma\setminus\Lambda\Gamma(e)=U_1\sqcup U_2$.

\medskip
 \noindent\textbf{Claim 1:} the subset $A_i=\phi^{-1}(\Sigma_i)$ is quasi-convex for $i=1,2$.

\begin{proof}[Proof of the Claim 1]
   Suppose that $\alpha\subseteq X$ is a geodesic with endpoints in $A_i$.  Since $\Sigma$ is a tree, any component of $\alpha$ which lies outside $A_i$ has endpoints in $Q(e)$, and thus remains within a bounded distance of $Q(e)$ by the quasi-convexity of $Q(e)$. Considering $Q(e)\subseteq A_i$, we have that $\alpha$ remains a bounded distance from $A_i$. Hence $A_i$ is quasi-convex.
\end{proof}

\medskip
 \noindent\textbf{Claim 2:} the boundary subset $U_i\cup\Lambda\Gamma(e)$ is the limit set of $A_i$, denoted by $\Lambda A_i$.

\begin{proof}[Proof of the Claim 2]
It is obvious that $U_i\cup\Lambda\Gamma(e)\subseteq\Lambda A_i$. On the other hand,
let $\xi\in\Lambda A_i$ and $o\in A_i$. It means that there exists a Morse gauge $N$ and a sequence $\{x_n\}$ in $A_i$ such that geodesics $\gamma_n=[o, x_n]$ is $N$-Morse and $\gamma_n$ converges uniformly on compact sets to a geodesic $\gamma$. Note that the geodesic $\gamma$ is $N_1$-Morse, where $N_1$ depends only on $N$.

Since each $A_i$ is quasiconvex, $\gamma_n$  lies in a uniform neighborhood of $A_i$. An argument analogous to Lemma~\ref{intersection limit sets} implies that $\gamma$ lies in a uniform neighborhood of $A_i$. By reasoning similar to Proposition~\ref{embedding}, $\phi(\gamma)$ either converges to some ideal point in $\partial \Sigma_i$, or passes infinitely many times through some vertex $v\in V(\Sigma_i)$. In the first case, $\pi(\xi)\in \partial\Sigma_i$, so $\xi\in \pi^{-1}(\partial\Sigma_i)$.
In the latter case, $\xi\in\Lambda\Gamma(v_i)$ for some $v_i\in V(\Sigma_i)$. Together, these two cases yield $\xi\in U_i\cup\Lambda\Gamma(e)$. The Claim follows.
\end{proof}

Note that $\partial_*\Gamma=\Lambda\Gamma(e)\sqcup U_1\sqcup U_2$. By \textbf{Claim 2}, each $U_i\cup\Lambda\Gamma(e)$ is closed in $\partial_*\Gamma$. Thus each $U_i$ is open in $\partial_*\Gamma$. Now we only need to verify that $U_i$ is nonempty. Since $\partial_*\Gamma$ contains at least $3$ points, $\Gamma$ is not virtually cyclic, then we can take $g_i\in \bigcup_{v\in V(\Sigma_i)}\Gamma(v)\setminus G(\gamma_e)$ by Proposition \ref{Prop:G(gamma)VirCyc}. Hence $g_i\gamma_e^+,g_i\gamma_e^-\in U_i$ and $U_i$ is nonempty.
Then $\partial_M\Gamma\setminus\Lambda\Gamma(e)$ is disconnected.
\end{proof}

\bibliographystyle{plain}
\bibliography{Ref}

\bigskip
{School of Mathematics, Hunan University, Changsha, Hunan, 410082, P.R.China}

{\tt Email: hansz@hnu.edu.cn}

\bigskip
{
School of Mathematics, Foshan University, Foshan, Guangdong, 528000, P.R.China}

{\tt Email: lianghao1019@hotmail.com}

\bigskip
{School of Mathematical Sciences \& LPMC, Nankai University, Tianjin 300071, P.R.China}

{\tt Email: qingliu@nankai.edu.cn}

\end{document}